\newcommand{\be}{\begin{equation}}
\newcommand{\ee}{\end{equation}}
\newcommand{\ben}{\begin{eqnarray*}}
\newcommand{\een}{\end{eqnarray*}}
\definecolor{darkgreen}{rgb}{0.09, 0.45, 0.27}
\newcommand{\cb}{\color{blue}}
\newtheorem{theorem}{Theorem}[section]
\newtheorem{lemma}[theorem]{Lemma}
\newtheorem{corollary}[theorem]{Corollary}
\newtheorem{proposition}[theorem]{Proposition}
\definecolor{debianred}{rgb}{0.84, 0.04, 0.33}
\begin{document}

\title{\vskip-0.3in Higher order evolution  inequalities with nonlinear convolution terms}

\author[a,*]{Roberta Filippucci}
\author[b,c,**]{Marius Ghergu}

\affil[a]{Dipartimento di Matematica e Informatica, Universit\'a degli Studi di Perugia, Via Vanvitelli 1, 06123 Perugia, Italy}
\affil[*]{Email: {\tt roberta.filippucci@unipg.it}}
\vspace{0.8in}

\affil[b]{School of Mathematics and Statistics, University College Dublin, Belfield, Dublin 4, Ireland}
\affil[c]{Institute of Mathematics Simion Stoilow of the Romanian Academy, 21 Calea Grivitei St., 010702 Bucharest, Romania}
\affil[**]{Corresponding author, Email: {\tt marius.ghergu@ucd.ie} }

\maketitle

\begin{abstract}
We are concerned with the study of existence and nonexistence of weak solutions to 
$$
\begin{cases}
&\displaystyle \frac{\partial^k u}{\partial t^k}+(-\Delta)^m u\geq  (K\ast |u|^p)|u|^q \quad\mbox{ in } \mathbb R^N \times \mathbb R_+,\\[0.1in]
&\displaystyle \frac{\partial^i u}{\partial t^i}(x,0) = u_i(x) \,\, \text{ in } \mathbb R^N,\, 0\leq i\leq k-1,\\
\end{cases}
$$
where  $N,k,m\geq 1$ are positive integers,   $p,q>0$ and $u_i\in L^1_{\rm loc}(\mathbb{R}^N)$ for $0\leq i\leq k-1$. We  assume  that $K$ is a radial positive and continuous function which decreases in a neighbourhood of infinity. 
In the above problem, $K\ast |u|^p$ denotes  the standard convolution operation between $K(|x|)$ and $|u|^p$. 
We obtain necessary conditions on $N,m,k,p$ and $q$ such that the above problem has  solutions. Our analysis emphasizes  the role played by the sign of $\displaystyle \frac{\partial^{k-1} u}{\partial t^{k-1}}$.

\end{abstract}
\medskip

\noindent{\bf Keywords:} Higher-order evolution inequalities; nonlinear  convolution terms; nonlinear capacity estimates.

\medskip

\noindent{\bf 2020 AMS MSC:} 35G20,  35K30, 35L30, 35B45

\newpage

\section{Introduction and the main results}
Let $N,k,m\geq 1$ be positive integers. In this work we are concerned with the problem
\begin{equation}\label{main}
\begin{cases}
&\displaystyle \frac{\partial^k u}{\partial t^k}+(-\Delta)^m u\geq  (K\ast |u|^p)|u|^q \quad\mbox{ in } \mathbb R^{N+1}_+:=\mathbb R^N \times \mathbb R_+,\\[0.1in]
&\displaystyle \frac{\partial^i u}{\partial t^i}(x,0) = u_i(x) \,\, \text{ in } \mathbb R^N,\, 0\leq i\leq k-1,\\
\end{cases}
\end{equation}
where $\mathbb R_+=(0,\infty)$, $N\geq 1$,   $p,q>0$ and $u_i\in L^1_{\rm loc}(\mathbb{R}^N)$ for $0\leq i\leq k-1$. We  assume  that $K\in C(\mathbb R_+;\mathbb R_+)$ satisfies:
$$
K(|x|)\in L^1_{\rm loc}(\mathbb{R}^N)
$$
and
\medskip

\noindent $(A)\;\;$ there exists $R_0>1$ such that $\displaystyle \inf_{r\in (0,R)} K(r)=K(R)$ for all $R>R_0$. 

\medskip

In particular, condition $(A)$ above implies that $K$ is decreasing on the interval $(R_0, \infty)$.  
Typical examples of potentials $K$ satisfying the above conditions are the constant functions as well as
$$
K(r)=r^{-\alpha},\; \alpha\in (0,N)\quad\mbox{ or }\quad K(r)=r^{-\alpha}\log^{\beta}(1+r),\; \alpha\in (0,N), \beta\in \mathbb{R}, \beta>\alpha-N.
$$
The picture below illustrates some other functions $K(r)$ satisfying the hypothesis $(A)$.
\begin{figure}[h]
\begin{center}
\includegraphics*[width=.42\linewidth]{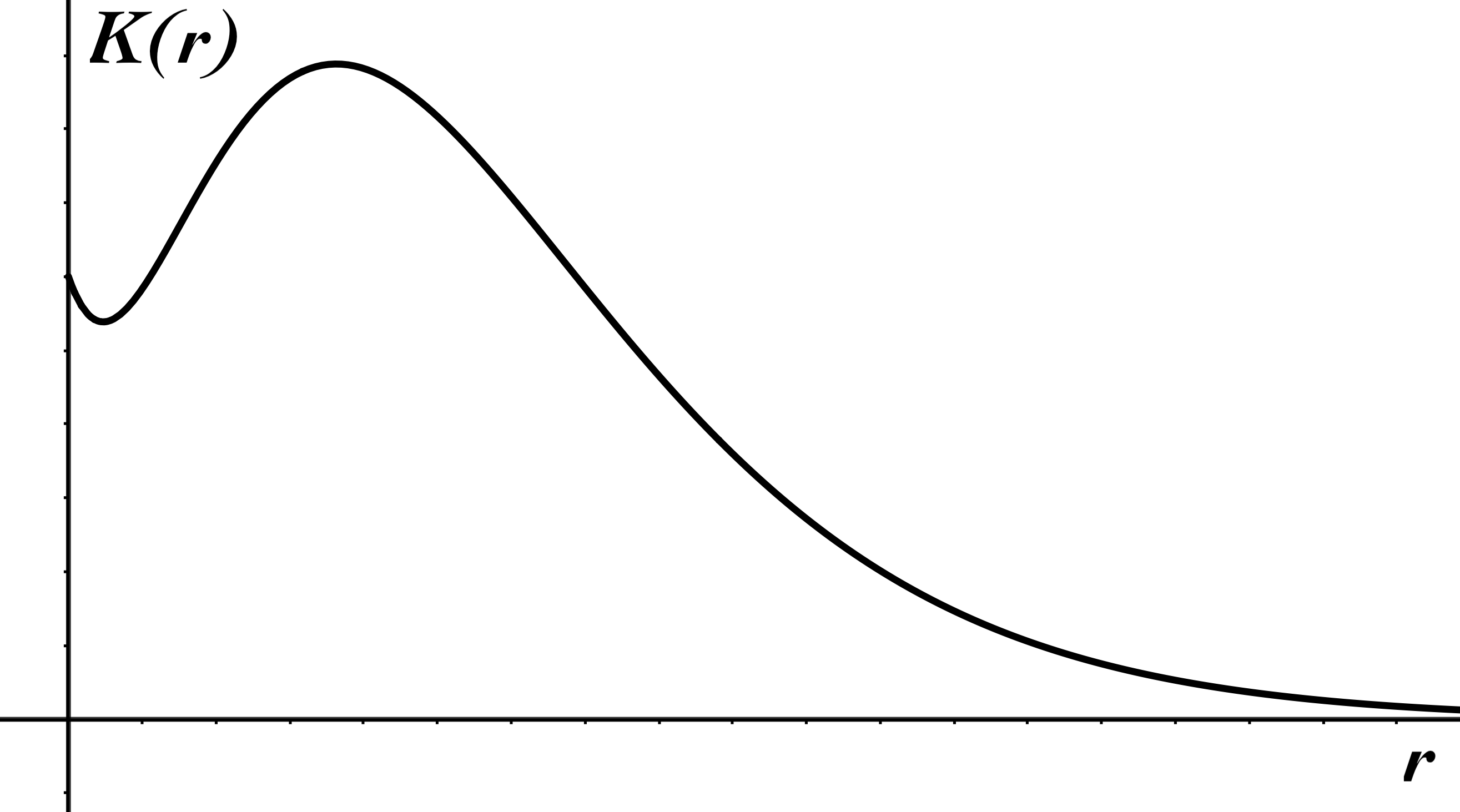} $\quad$
\includegraphics*[width=.47\linewidth]{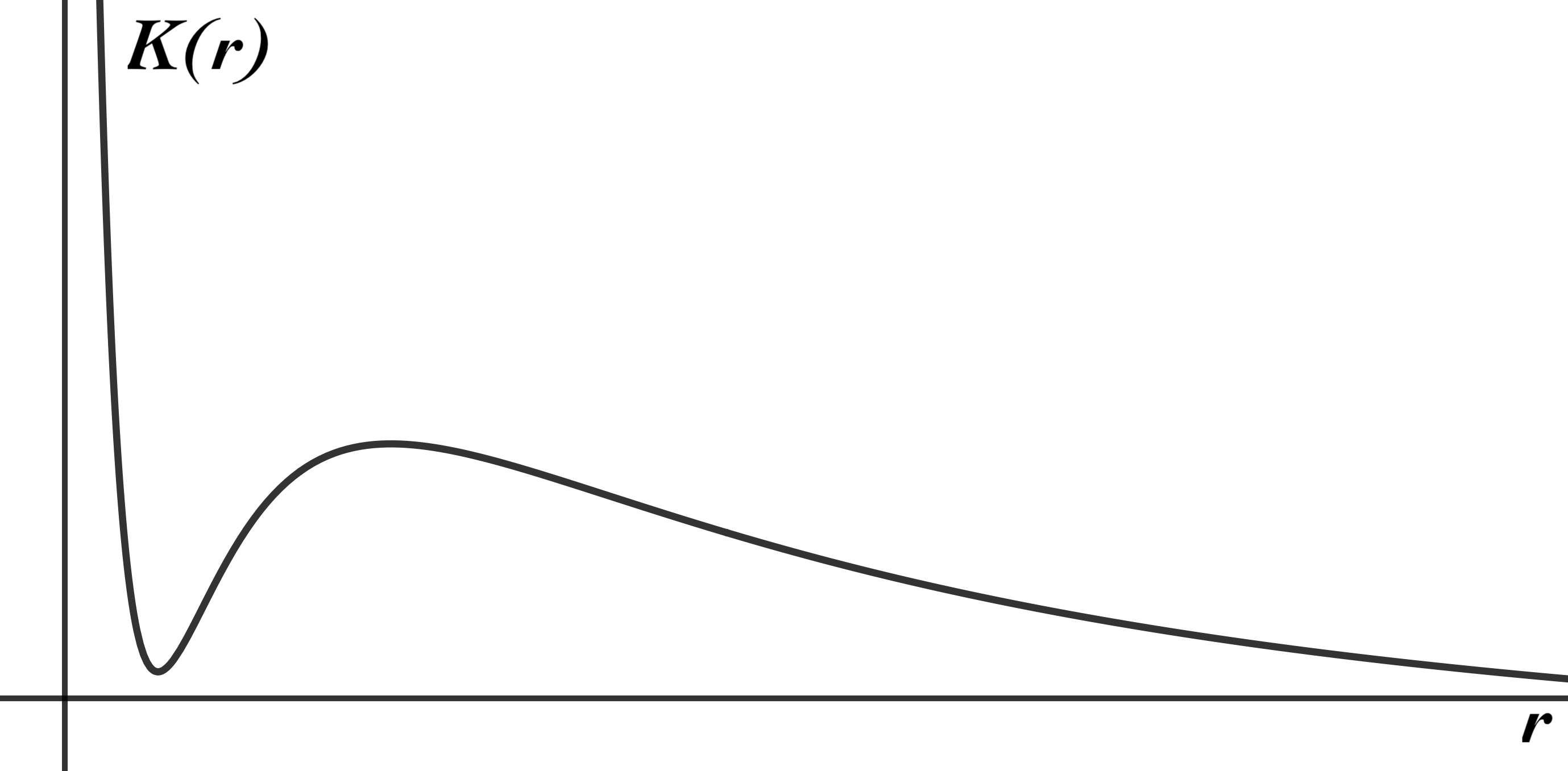}
\end{center}
\end{figure}

\noindent By $K\ast |u|^p$ we denote the standard convolution operator defined by
\begin{equation}\label{convl}
(K\ast |u|^p)(x,t)=\int\limits_{\mathbb R^N}K(|x-y|)|u(y,t)|^p dy\quad\mbox{ for all }(x,t)\in\mathbb R^{N+1}_+
\end{equation}
and we shall require the above integral to be finite for almost all $(x,t)\in\mathbb R^{N+1}_+$.

We are interested in  {\it weak solutions} of \eqref{main},  that is,  functions  $u\in  {L}^{p}_{\rm loc}(\mathbb R^{N+1}_+)$ such that
\begin{enumerate}
\item[(i)] $(K\ast |u|^p)|u|^q  \in  {L}^1_{\rm loc}(\mathbb R^{N+1}_+)$;

\item[(ii)] for any nonnegative test function $\varphi \in C^{\infty}_{c}(\mathbb R^{N+1}_+)$, we have
\end{enumerate}
\begin{multline} \label{weak_form}
\sum_{i=1}^{k}\! (-1)^i \!\! \int\limits_{\mathbb R^N}u_{k-i}(x)\frac{\partial^{i-1} \varphi}{\partial t^{i-1}}(x,0)dx 
+\int\limits_{{\mathbb R^{N+1}_+}} u \Big[ (-1)^k\frac{\partial^k \varphi}{\partial t^k}+(-\Delta)^m \varphi \Big]\, dxdt \\ \ge \int\limits_{{\mathbb R^{N+1}_+}} (K\ast |u|^p)|u|^q  \varphi \, dxdt.
\end{multline}

\medskip

Using a standard integration by parts it is easily seen that any classical solution of \eqref{main}
is also a weak solution. Let us point out that  condition (i)  above implies that the convolution integral in \eqref{convl} is finite for a.a. $(x,t)\in\mathbb R^{N+1}_+$. This further yields $u\in L^p_{\rm loc}(\mathbb R^{N+1}_+)$. Indeed for $R>R_0$ and 
 $x,y\in B_R$ we have  $|x-y|\le 2R$, so that by the definition of $K$ and its monotonicity we deduce
\begin{equation}\label{uLp}
\infty>(K *|u|^p)(x,t)\geq K(2R)\int\limits_{B_R}|u(y,t)|^{p}dy.
\end{equation}
Since early 1980s many research works have been devoted to the study of the prototype evolution  inequalities 
$$
\frac{\partial u}{\partial t}-\Delta u\geq  |u|^q \quad\mbox{ and } \quad 
\frac{\partial^2 u}{\partial t^2}-\Delta u\geq  |u|^q \quad\mbox{ in } \mathbb R^{N+1}_+.
$$
To the best of our knowledge, the first study of nonexistence of solutions to higher order hyperbolic inequalities is due to L. V\'eron and S.I. Pohozaev \cite{PV} related to
\begin{equation}\label{pveron}
\frac{\partial^2 u}{\partial t^2}-\mathcal{L}_m \big(\phi(u)\big)\geq  |u|^q \quad\mbox{ in } \mathbb R^{N+1}_+,
\end{equation}
where 
$$
\mathcal{L}_m v=\sum_{|\alpha|=m} D^\alpha (a_\alpha (x,t) v)\quad\mbox{ for some integer }m\geq 1
$$
and $\phi$ is a locally bounded real-valued function such that $|\phi(u)|\leq c|u|^p$, for some $c,p>0$. It is obtained in \cite{PV} that if $q>\max\{p,1\}$ and one of the following conditions hold
\begin{equation}\label{pv1}
\mbox{ either }\quad m\geq 2N\quad\mbox{ or }\quad m<2N\leq \frac{m(q+1)}{q-p},
\end{equation}
then \eqref{pveron} has no solutions in $\mathbb R^{N+1}_+$  satisfying 
\begin{equation}\label{cint}
\int_{\mathbb R^N} \frac{\partial u}{\partial t} (x,0)dx\geq 0.
\end{equation}
Condition \eqref{cint} is essential in the study of nonexistence of solutions to \eqref{pveron} and will also play an important role in the study of \eqref{main} (see Theorem \ref{thmain1} below). 
Further, if $m=2$, it is obtained in \cite{PV} that if \eqref{pv1} fails to hold then \eqref{pveron} has a positive solution. Among the results for $m=2$,  we quote also \cite{Gu}, where a weighted nonlinearity is considered. 

Another set of results that motivate our present work are due to G.G. Laptev \cite{L2002, L2003} where the following problem is studied 
\begin{equation}\label{lap}
\begin{cases}
&\displaystyle \frac{\partial^k u}{\partial t^k}-\Delta  u\geq  |x|^{-\sigma} |u|^q, u\geq 0 \quad\mbox{ in } \Omega \times \mathbb R_+,\\[0.1in]
&\displaystyle \frac{\partial^{k-1} u}{\partial t^{k-1}}(x,0) \geq 0 \,\, \text{ in } \Omega.
\end{cases}
\end{equation}
In the above, $\Omega\subset \mathbb R^N$ is either the exterior of a ball or an unbounded cone-like domain; for other results on hyperbolic inequalities in exterior domains see \cite{JS21,JSY19,MP}. We observe that solutions of \eqref{lap} are also required to satisfy \eqref{cint}. It is obtained in \cite{L2003} that if $\sigma>-2$ then the above problem has no solutions provided that
$$
1<q<q_k^*:=1+\frac{2+\sigma}{N-2+2/k}.
$$
The above exponent $q_k^*$ coincides with the Fujita-Hayakawa critical exponent if $k=1$, that is, $q_1^*=1+\frac{2+\sigma}{N}$ and with the Kato  critical exponent if $k=2$, that is, $q_2^*=1+\frac{2+\sigma}{N-1}$. 

The study of convolution terms in time-dependent partial differential equations goes back to near a century ago. Indeed, the equation
\begin{equation}\label{hartree}
i\psi_t-\Delta \psi=(|x|^{\alpha-N}\ast \psi^2)\psi\quad\mbox{in }{\mathbb R}^N, \alpha\in (0, N), N\geq 1,  
\end{equation}
was introduced  by D.H. Hartree \cite{H1,H2,H3} in 1928 for $N=3$ and $\alpha=2$ in relation to the Schr\"odinger equation in quantum physics. Nowadays, \eqref{hartree} bears the name of {\it Choquard equation}. Stationary solutions to \eqref{hartree} and its related equations have been extensively investigated from various perspectives: ground states, isolated singularities, symmetry of solutions; see e.g., \cite{FG1, GMM2021, GT2015, GT2016, GT2016book, MV2013, MV2017}.
In a different direction, G. Whithman \cite[Section 6]{W67} considered the nonlocal equation
$$
\frac{\partial u}{\partial t}+u \frac{\partial u}{\partial x}-\int_{-\infty}^{\infty}K(x-y) \frac{\partial u}{\partial x}(y,t)dy=0
$$
to study general dispersion in water waves; for recent results on nonlocal evolution equations  we refer the reader to \cite{DE22, DF21}. 
In \cite{FG2} it is studied a class of quasilinear parabolic inequalities featuring nonlocal convolution terms in the form
$$
\displaystyle \frac{\partial u}{\partial t}-\mathcal{L} u\geq  (K\ast u^p)u^q \quad\mbox{ in } \mathbb R^{N+1}_+,
$$
where $\mathcal{L}u$ is a quasilinear operators that contains as prototype model the $m$-Laplacian and the generalized mean curvature operator.

In the present work we investigate the existence and nonexistence of weak solutions to \eqref{main}. The stationary solutions to \eqref{main} were discussed recently in \cite{GMM2021}.  
Our main result concerning the inequality \eqref{main} reads as follows. 

\begin{theorem}\label{thmain1}
Assume $N,m,k\geq 1$ and $p,q>0$.

\begin{enumerate}
\item[{\rm (i)}] If $k\geq 1$ is a even integer and $q\geq 1$, then \eqref{main}  admits some positive solutions $u\in C^\infty(\mathbb{R}^{N+1}_+)$ which verify
$$
u_{k-1}=\frac{\partial^{k-1} u}{\partial t^{k-1}} (\cdot, 0)<0\quad\mbox{ in }\mathbb{R}^N.
$$
\item[{\rm (ii)}] If $p+q>2$ and
\begin{equation} \label{K_limsup}\limsup_{R\to\infty}K(R)R^{\frac{2N+2m/k}{p+q}-N+2m(1-1/k)}>0,\end{equation}
then \eqref{main} has no nontrivial solutions such that

\begin{equation} \label{int_u1}
u_{k-1}\geq 0\quad\mbox{ or }\quad u_{k-1}\in L^1(\mathbb R^N)\quad \mbox{ and } \; \int\limits_{\mathbb R^N} u_{k-1}(x)dx>0. 
\end{equation}
\end{enumerate}

\end{theorem}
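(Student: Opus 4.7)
The plan is to treat the two parts separately. Part (i) asks for an explicit positive solution, which can be built directly. Part (ii) is a nonexistence result, to which the Mitidieri--Pohozaev nonlinear capacity method applies, adapted to the convolution nonlinearity.

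\textbf{Part (i).} I would try a separable solution
$$u(x,t)=A\,f(x)\,e^{-\lambda t},\qquad f(x)=(1+|x|^2)^{-\alpha},$$
with parameters $A,\alpha,\lambda>0$ to be fixed. Because $k$ is even, the odd-order $t$-derivative flips sign, so $u_{k-1}(x)=-A\lambda^{k-1}f(x)<0$, meeting the required sign. Substituting into \eqref{main} and dividing by $Ae^{-\lambda t}$ reduces the task to the pointwise inequality
$$\lambda^k f+(-\Delta)^m f\;\ge\;A^{p+q-1}\,e^{-(p+q-1)\lambda t}\,(K\ast f^p)\,f^q,$$
which is tightest at $t=0$ because $p+q>1$. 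For $\alpha$ sufficiently large, $(K\ast f^p)$ is a bounded continuous function (using $K\in L^1_{\mathrm{loc}}$ and the boundedness of $K$ at infinity, which follows from condition $(A)$), while $(-\Delta)^m f/f$ is a bounded rational function of $|x|$. Thanks to $q\ge 1$ we have $f^{q-1}\le f(0)^{q-1}=1$, so the right-hand side is uniformly bounded. Picking $\lambda$ large absorbs $(-\Delta)^m f/f$ into $\lambda^k$, and then $A$ small enough absorbs the remaining $A^{p+q-1}$ factor, producing a $C^\infty(\mathbb R^{N+1}_+)$ positive solution with the required negative $u_{k-1}$.

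\textbf{Part (ii).} Choose smooth cutoffs $\Phi,\Psi:[0,\infty)\to[0,1]$ equal to $1$ on $[0,1/2]$ and vanishing on $[1,\infty)$, and modify $\Psi$ near $0$ so that $\Psi^{(j)}(0)=0$ for $j=1,\dots,k-1$. Set
$$\varphi_R(x,t)=\Phi\!\left(|x|^2/R^2\right)^{\ell}\Psi\!\left(t/R^{2m/k}\right)^{\ell},\qquad \ell\gg 1,$$
the parabolic scaling $t\sim R^{2m/k}$ matching $\partial_t^k$ with $(-\Delta)^m$. Plugging $\varphi_R$ into \eqref{weak_form} annihilates every initial-data term except the one containing $u_{k-1}$, yielding
$$J_R+\int_{\mathbb R^N}u_{k-1}(x)\varphi_R(x,0)\,dx\;\le\;\int_{\mathbb R^{N+1}_+}|u|\,G_R\,dxdt,$$
where $J_R:=\int(K\ast|u|^p)|u|^q\varphi_R\,dxdt$ and $G_R:=|(-1)^k\partial_t^k\varphi_R+(-\Delta)^m\varphi_R|$. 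Under either alternative in \eqref{int_u1}, the boundary term is eventually nonnegative (in the first case pointwise, in the second by $\varphi_R(\cdot,0)\nearrow 1$ and dominated convergence), so $J_R\le\int|u|\,G_R\,dxdt$ for large $R$. The key step is a Hölder inequality adapted to the convolution: condition $(A)$ gives the pointwise lower bound $(K\ast|u|^p)(x,t)\ge K(2R)\int_{B_R}|u(y,t)|^p\,dy$ for $x\in B_R$; symmetrizing the double integral defining $J_R$ in $(x,y)$ and applying AM-GM to $|u(y)|^p|u(x)|^q$, followed by Hölder with conjugate exponents $p+q$ and $(p+q)/(p+q-1)$ on $\int|u|G_R$, gives a bound of the form
$$J_R\;\le\; C\,K(2R)^{-1/(p+q-1)}\,R^{E},$$
where $E$ is an exponent determined by the scaling of $\varphi_R$. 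A careful bookkeeping shows that $K(2R)^{-1/(p+q-1)}R^{E}\to 0$ along a sequence $R_n\to\infty$ precisely under \eqref{K_limsup}, forcing $J_\infty=0$ by monotone convergence, hence $u\equiv 0$. In the second alternative of \eqref{int_u1}, the strictly positive lower bound $\int u_{k-1}\varphi_R(\cdot,0)\,dx\to\int u_{k-1}\,dx>0$ yields the contradiction directly.

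\textbf{Main obstacle.} The delicate point is the convolution-aware Hölder step: since $(K\ast|u|^p)(x)$ cannot be bounded below pointwise by a multiple of $|u(x)|^p$, one is forced to trade locality for a factor of $K(2R)$ through the monotonicity of $K$, which is precisely how the decay rate of $K$ enters the critical exponent in \eqref{K_limsup}. The bookkeeping of all $R$-powers produced by the combination of Hölder, symmetrization, and the scaling of $\varphi_R$ must reproduce the exponent in \eqref{K_limsup} exactly, and this is the most error-prone portion of the proof.
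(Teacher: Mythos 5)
Part (i) of your proposal is essentially the paper's construction (the ansatz $e^{-Mt}(1+|x|^2)^{-\gamma/2}$ with $k$ even giving the negative sign of $u_{k-1}$, boundedness of $K\ast f^p$ from condition $(A)$ and integrability of $f^p$, and $q\ge1$ to handle $f^{q-1}\le1$); your extra small-amplitude parameter $A$ is harmless and the argument is correct.

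Part (ii) has the right architecture (nonlinear capacity with $t\sim R^{2m/k}$, the symmetrization lemma turning the convolution term into $K(cR)\,J(t)^2$ with $J(t)=\int|u|^{(p+q)/2}\varphi\,dx$), but two concrete points do not work as written. First, the H\"older exponents. After the symmetrization step the quantity you control from below is $K(cR)\int_0^{2R^{2m/k}}J(t)^2\,dt$, which involves $|u|^{(p+q)/2}$, \emph{not} $|u|^{p+q}$. Applying H\"older to $\int|u|\,G_R$ with the conjugate pair $\bigl(p+q,\tfrac{p+q}{p+q-1}\bigr)$ produces $\bigl(\int|u|^{p+q}\varphi\bigr)^{1/(p+q)}$ on the right, and this cannot be absorbed into the left-hand side: by Cauchy--Schwarz one only has $\bigl(\int|u|^{(p+q)/2}\varphi\,dx\bigr)^2\le |B_{2R}|\int|u|^{p+q}\varphi\,dx$, i.e.\ the inequality goes the wrong way. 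The correct choice is the pair $\bigl(\tfrac{p+q}{2},\tfrac{p+q}{p+q-2}\bigr)$ in $(x,t)$, followed by a second H\"older in $t$ alone to convert $\int_t\bigl(\int_x|u|^{(p+q)/2}\varphi\,dx\bigr)\,dt$ into $\bigl(\int_t J^2\,dt\bigr)^{1/2}R^{\gamma/2}$; this is precisely where the hypothesis $p+q>2$ enters, which your version leaves unexplained (your exponents would only require $p+q>1$).

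Second, and more seriously, your conclusion "$K(2R)^{-1/(p+q-1)}R^{E}\to0$ along a sequence precisely under \eqref{K_limsup}" is false when the limsup in \eqref{K_limsup} is a \emph{finite} positive number $\ell$ (e.g.\ the critical power $K(r)=r^{-\alpha}$ with $p+q$ exactly at the threshold). In that case the capacity estimate only yields $\int_0^\infty J^2(t)\,dt\le C\ell^{-(p+q)/(p+q-1)}<\infty$, i.e.\ $J\in L^2(0,\infty)$, which does not force $u\equiv0$. The missing step is the standard refinement: the two right-hand-side integrals are supported, respectively, on the time shell $[R_j^{\gamma},2R_j^{\gamma}]$ and on the annulus $B_{2R_j}\setminus B_{R_j}$; choosing a sparse sequence $R_j>2R_{j-1}$ realizing the limsup, the disjointness of these shells together with $J\in L^2(0,\infty)$ (and the convergence of the corresponding series) shows both integrals are $o(1)$, which upgrades the capacity inequality to $\bigl(\int J^2\bigr)^{(p+q-2)/(p+q)}\le C\ell^{-1}o(1)$ and only then gives $u\equiv0$. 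Without this case analysis the proof covers only $\limsup=\infty$, which is strictly weaker than the stated theorem.
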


Let us note that condition  \eqref{K_limsup} and the fact that $K$ is decreasing in a neighbourhood of infinity imply that 
$$
\frac{2N+2m/k}{p+q}\geq N-2m\Big(1-\frac{1}{k}\Big).
$$  
Also, under extra assumptions on $u_{k-1}$ and strengthening \eqref{K_limsup},  we can handle the case $\int_{\mathbb R^N}u_{k-1}(x)=0$ in \eqref{int_u1} for which the same conclusion as in Theorem \ref{thmain1}(ii) holds (see Proposition \ref{lo}).

The proof of Theorem \ref{thmain1} relies on nonlinear capacity estimates specifically adapted to the nonlocal setting of our problem.  More precisely, we derive integral estimates in time for the new quantity
$$
J(t)=\int_{\mathbb R^N}u^{\frac{p+q}{2}} (x,t)\varphi (x, t)dx\, , \,\,\, t\geq 0,
$$
where  $\varphi$ is a specially constructed test function  with compact support (see \eqref{test_function}).

Theorem \ref{thmain1} shows a sharp contrast in the existence/nonexistence diagram according to whether $\frac{\partial^{k-1}u}{\partial t^{k-1}}(x,0)$ has constant sign (positive or negative) on $\mathbb{R}^N$. 
To better illustrate this fact, let us discuss the case of pure powers in the potential $K(r)=r^{-\alpha}$, $\alpha\in (0,N)$ and $k=1,2$. 

Let us first consider the parabolic problem
\begin{equation}\label{mainp}
\begin{cases}
&\frac{\partial u}{\partial t} +(-\Delta)^m u\geq  (|x|^{-\alpha}\ast |u|^p)|u|^q \quad\mbox{ in } \mathbb R^{N+1}_+:=\mathbb R^N \times \mathbb R_+,\\
&u(x,0) = u_0(x) \,\, \text{ in } \mathbb R^N.
\end{cases}
\end{equation}
Note that since $k=1$ in \eqref{main}, condition \eqref{int_u1} is satisfied for all nonnegative solutions of \eqref{mainp}. 
\begin{corollary}\label{cor1} Let $N,m\geq 1$, $p,q>0$ and $\alpha\in (0, N)$.
\begin{enumerate}
\item[{\rm (i)}] If $0<\alpha<m$ and
$$
2<p+q\leq \frac{2N+2m}{N+\alpha},
$$ 
then \eqref{main} has no nontrivial nonnegative solutions;
\item[{\rm (ii)}] If $N>2m$ and
$$
\min\{p,q\}>\frac{N-\alpha}{N-2m}\quad\mbox{ and } p+q>\frac{2N-\alpha}{N-2m},
$$ 
then \eqref{mainp}  has positive solutions.
\end{enumerate}
\end{corollary}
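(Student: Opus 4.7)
The plan is to invoke Theorem~\ref{thmain1}(ii) directly, with $K(r)=r^{-\alpha}$ and $k=1$. With $k=1$, condition \eqref{int_u1} on $u_{k-1}$ reduces to $u_0\ge 0$, which holds automatically for nonnegative solutions, and the hypothesis $p+q>2$ is assumed. The only substantive check is \eqref{K_limsup}, which in this setting becomes
$$
\limsup_{R\to\infty} R^{-\alpha+(2N+2m)/(p+q)-N}>0,
$$
equivalent to $-\alpha+(2N+2m)/(p+q)-N\ge 0$, i.e., $p+q\le (2N+2m)/(N+\alpha)$: exactly the given hypothesis. (Combining with $p+q>2$ also forces $\alpha<m$, consistent with the statement.) Theorem~\ref{thmain1}(ii) then delivers nonexistence.

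\textbf{Part (ii).} Since any positive, time-independent supersolution $\bar u\in C^\infty(\mathbb R^N)$ of
$$
(-\Delta)^m \bar u \ge (|x|^{-\alpha}\ast \bar u^p)\bar u^q\quad\text{in }\mathbb R^N
$$
produces, via $u(x,t):=\bar u(x)$, a solution of \eqref{mainp} with $u_0=\bar u$, it suffices to construct such a stationary $\bar u$. I would use the ansatz $\bar u=\varepsilon w$ with $w(x)=(1+|x|^2)^{-(N-2m)/2}$, calibrated so that $w(x)\sim|x|^{-(N-2m)}$ at infinity, matching the decay rate of the fundamental solution of $(-\Delta)^m$. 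Three asymptotic estimates would drive the argument: first, an explicit iteration of $\Delta$ on this radial profile (via a generalized binomial expansion) shows that $(-\Delta)^m w>0$ on $\mathbb R^N$ with $(-\Delta)^m w(x)\sim C|x|^{-N}$ at infinity; second, the hypothesis $p>(N-\alpha)/(N-2m)$ is equivalent to $(N-2m)p+\alpha>N$, and a Riesz-type composition estimate (splitting $\int|x-y|^{-\alpha}w(y)^p\,dy$ into the regions $\{|y|\le|x|/2\}$, the annulus, and $\{|y|\ge 2|x|\}$) yields $(|x|^{-\alpha}\ast w^p)(x)\lesssim|x|^{-[(N-2m)p+\alpha-N]}$ at infinity; third, multiplying by $w^q\lesssim|x|^{-(N-2m)q}$ produces a right-hand side of order $|x|^{-[(N-2m)(p+q)+\alpha-N]}$, and the hypothesis $p+q>(2N-\alpha)/(N-2m)$ is precisely $(N-2m)(p+q)+\alpha-N>N$, so the right-hand side decays strictly faster than $(-\Delta)^m w$. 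Together with positivity and boundedness near the origin, this would yield a uniform inequality $(-\Delta)^m w\ge C(|x|^{-\alpha}\ast w^p)w^q$ on $\mathbb R^N$ for some $C>0$. Since the left-hand side is linear in $\varepsilon$ and the right-hand side is of order $\varepsilon^{p+q}$ with $p+q>1$ under our hypotheses, a sufficiently small $\varepsilon>0$ makes $\bar u=\varepsilon w$ a genuine stationary supersolution.

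The step I expect to be the main obstacle is the uniform strict positivity of $(-\Delta)^m w$ on all of $\mathbb R^N$: proving this (rather than only the asymptotic $(-\Delta)^m w\sim C|x|^{-N}$) requires carefully tracking the signs of the polynomial factors produced by successive Laplacians applied to $(1+|x|^2)^{-(N-2m)/2}$, especially in the intermediate regime between the origin and infinity. The auxiliary condition $q>(N-\alpha)/(N-2m)$ should enter symmetrically, either in bounding the annular contribution in the second estimate or in ensuring local integrability of $(|x|^{-\alpha}\ast w^p)w^q$; its role parallels that of $p>(N-\alpha)/(N-2m)$.
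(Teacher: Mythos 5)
Your Part (i) is correct and coincides with the paper's (one-line) justification: apply Theorem~\ref{thmain1}(ii) with $k=1$ and $K(r)=r^{-\alpha}$; condition \eqref{int_u1} is automatic for nonnegative solutions, and \eqref{K_limsup} reduces exactly to $p+q\le\frac{2N+2m}{N+\alpha}$.

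Part (ii) contains a genuine error. The paper disposes of this part by citing \cite[Theorem 1.4]{GMM2021} for the stationary inequality, and your reduction to a time-independent supersolution is the right first step; the flaw is in the ansatz. The function $w(x)=(1+|x|^2)^{-(N-2m)/2}$ is the standard higher-order bubble: by conformal covariance of $(-\Delta)^m$ (already visible for $m=1$, where $-\Delta(1+|x|^2)^{-(N-2)/2}=N(N-2)(1+|x|^2)^{-(N+2)/2}$) one has $(-\Delta)^m w=C_{N,m}(1+|x|^2)^{-(N+2m)/2}$ with $C_{N,m}>0$. So the positivity you flagged as the main obstacle is in fact automatic for this exponent, but the decay rate is $|x|^{-(N+2m)}$, not $|x|^{-N}$ as you assert. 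The exponent $N-2m$ is precisely the non-generic one: $|x|^{2m-N}$ lies in the kernel of $(-\Delta)^m$ away from the origin, so the leading terms cancel at each iteration and the surviving term decays $2m$ orders faster than the generic rate $|x|^{-\beta-2m}$ would suggest. With the correct asymptotics your final comparison requires $(N-2m)(p+q)+\alpha-N\ge N+2m$, i.e.\ $p+q\ge\frac{2N+2m-\alpha}{N-2m}$, which is strictly stronger than the hypothesis $p+q>\frac{2N-\alpha}{N-2m}$. Concretely, for $N=3$, $m=1$, $\alpha=1/2$, $p+q=6$ the hypothesis holds, yet $(-\Delta)w\sim|x|^{-5}$ while the right-hand side is of order $|x|^{-7/2}$, so the supersolution inequality fails for all large $|x|$; shrinking $\varepsilon$ adjusts constants but cannot repair a comparison that fails in the power of $|x|$.

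The repair is to take $\bar u=\varepsilon(1+|x|^2)^{-\beta/2}$ with $\beta$ strictly less than $N-2m$ but close to it. Then $(-\Delta)^m(1+|x|^2)^{-\beta/2}\sim\prod_{j=0}^{m-1}(\beta+2j)(N-\beta-2j-2)\,|x|^{-\beta-2m}$ at infinity with a positive constant (global positivity on $\mathbb R^N$ still needs a separate check, carried out in \cite{GMM2021}), and the decay comparison becomes $\beta(p+q-1)>N+2m-\alpha$ together with $\beta\min\{p,q\}>N-\alpha$, both attainable since $p+q-1>\frac{N+2m-\alpha}{N-2m}$ and $\min\{p,q\}>\frac{N-\alpha}{N-2m}$. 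This is exactly the scheme the paper itself implements for $m=1$, $k=2$ in the proof of Corollary~\ref{cor2}(iii) (see the choice of $\beta$ in \eqref{ga1}), and it is the content of the cited \cite[Theorem 1.4]{GMM2021}.
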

Part (i) in the above result follows from Theorem \ref{thmain1} while part (ii) follows from \cite[Theorem 1.4]{GMM2021} where the stationary case of \eqref{mainp} was discussed.
Let us note that for $m=1$, the nonexistence of a nonnegative solution in Corollary \ref{cor1}(i) was already observed in \cite{FG2}. 

We next take $K(r)=r^{-\alpha}$, $k=2$ and $m=1$ in Theorem \ref{thmain1}. We thus consider
\begin{equation}\label{mainh}
\begin{cases}
&\displaystyle \frac{\partial^2 u}{\partial t^2} -\Delta  u\geq  (|x|^{\alpha}\ast |u|^p)|u|^q \quad\mbox{ in } \mathbb R^{N+1}_+:=\mathbb R^N \times \mathbb R_+,\\[0.1in]
&u(x,0) = u_0(x) \,\, \text{ in } \mathbb R^N,\\
&\displaystyle \frac{\partial u}{\partial t}(x,0) = u_{1}(x) \,\, \text{ in } \mathbb R^N.
\end{cases}
\end{equation}

Our result on problem \eqref{mainh} is stated below.
\begin{corollary}\label{cor2} Let $N,m\geq 1$, $p,q>0$  and $\alpha\in (0, N)$.
\begin{enumerate}
\item[{\rm (i)}] If $q\geq 1$, then \eqref{main} admits some positive solutions $u\in C^2(\mathbb{R}^{N+1}_+)$ which verify
$$
\frac{\partial u}{\partial t}(\cdot, 0)<0\quad\mbox{ in }\mathbb{R}^N.
$$
\item[{\rm (ii)}] If $0<\alpha<\min\{N,3/2\}$ and
$$
2<p+q\leq \frac{2N+1}{N+\alpha-1},
$$ 
then \eqref{mainh} has no nontrivial solutions satisfying 
\begin{equation}\label{cintt}
\frac{\partial u}{\partial t}(x,\cdot)\geq 0 \mbox{ in }\mathbb R^N \quad\mbox{ or }\quad \int\limits_{\mathbb R^N} \frac{\partial u}{\partial t}(x,\cdot)dx>0. 
\end{equation}
\item[{\rm (iii)}] If $N>2$ and
\begin{equation}\label{csol}
\min\{p,q\}>\frac{N-\alpha}{N-2}\quad\mbox{ and } \quad p+q>\frac{2N-\alpha}{N-2},
\end{equation}
then  \eqref{mainh}  admits some positive solutions which verify $\displaystyle \frac{\partial u}{\partial t}(x,\cdot)> 0$ in $\mathbb R^N$.
\end{enumerate}
\end{corollary}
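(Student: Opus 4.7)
The statement decomposes into three independent parts. Part (i) is a direct specialization of Theorem \ref{thmain1}(i) to $k=2$ (even) and $m=1$: since $q\ge 1$, that result already produces smooth positive solutions with $\partial_t u(\cdot,0)<0$, which is exactly the content of (i).

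For Part (ii) I would apply Theorem \ref{thmain1}(ii) with $k=2$, $m=1$ and $K(r)=r^{-\alpha}$ (reading the convolution kernel in \eqref{mainh} as $|x|^{-\alpha}$, consistent with the general framework and with hypothesis (A)). Inserting these values the exponent in \eqref{K_limsup} becomes
$$
\frac{2N+1}{p+q}-N+1-\alpha,
$$
whose nonnegativity rewrites as $p+q\le (2N+1)/(N+\alpha-1)$. Compatibility with the hypothesis $p+q>2$ forces $(2N+1)/(N+\alpha-1)>2$, i.e.\ $\alpha<3/2$; combined with the standing $\alpha<N$ this produces $\alpha<\min\{N,3/2\}$. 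Since $k=2$, condition \eqref{int_u1} collapses to \eqref{cintt}, and Theorem \ref{thmain1}(ii) delivers the claimed nonexistence.

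Part (iii) is the existence counterpart and, just as in Corollary \ref{cor1}(ii), is built on the stationary theory of \cite{GMM2021}. Under \eqref{csol} and $N>2$, Theorem 1.4 of \cite{GMM2021} supplies a positive $v\in C^2(\mathbb R^N)$ satisfying
$$
-\Delta v\ge (|x|^{-\alpha}\ast v^p)v^q\quad\text{in }\mathbb R^N.
$$
Rescaling $v\mapsto \lambda v$ with small $\lambda>0$ (which, thanks to $p+q>1$, multiplies the right-hand side by the large factor $\lambda^{1-p-q}$) arranges the stationary inequality with an arbitrarily large constant $C$. I would then look for a supersolution of \eqref{mainh} in separated form $u(x,t)=f(t)v(x)$ with $f(0)=1$, $f'(t)>0$ on $[0,\infty)$ and $f$ bounded, which automatically secures $\partial_t u(x,\cdot)=f'(t)v(x)>0$ in $\mathbb R^N$ as required. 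Substitution reduces \eqref{mainh} to the pointwise inequality
$$
f''(t)\,v(x)+f(t)(-\Delta v(x))\ge f(t)^{p+q}(|x|^{-\alpha}\ast v^p)(x)\,v(x)^q,
$$
and a natural candidate profile is $f(t)=1+\varepsilon(1-e^{-t})$ with $\varepsilon>0$ small.

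The main obstacle is verifying this last inequality globally. Any bounded strictly increasing $f$ on $[0,\infty)$ must be concave somewhere, so the term $f''v$ enters with the wrong sign and has to be absorbed by the surplus produced by the rescaling of $v$. Since $v$ decays at infinity, controlling this absorption pointwise on $\mathbb R^{N+1}_+$ is the delicate step: the amplitude $\varepsilon$ and the scaling $\lambda$ must be coupled to the decay rate of $v$ and of the convolution $|x|^{-\alpha}\ast v^p$ that are inherited from \cite[Theorem 1.4]{GMM2021}. This is the technical heart of (iii); by contrast (i) and (ii) are immediate once the exponent arithmetic is unpacked.
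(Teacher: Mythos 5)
Your treatment of parts (i) and (ii) is correct and matches the paper: (i) is Theorem \ref{thmain1}(i) with $k=2$, $m=1$, and your exponent arithmetic for (ii) (the threshold $p+q\le (2N+1)/(N+\alpha-1)$ and the compatibility constraint $\alpha<3/2$) is exactly how the paper obtains it.

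Part (iii), however, has a genuine gap, and the gap is not merely technical: the separated ansatz $u(x,t)=f(t)v(x)$ with $f$ bounded and strictly increasing cannot be made to work. As you observe, such an $f$ must satisfy $f''<0$ somewhere (in fact on a set of infinite measure), so at $t=0$, say, you must absorb a term of size $\varepsilon v(x)\sim \varepsilon |x|^{-\beta}$ into the surplus left over from the elliptic inequality. But that surplus is of the order of $-\Delta v\sim |x|^{-\beta-2}$, and the convolution term $(|x|^{-\alpha}\ast v^p)v^q$ decays like $|x|^{N-\alpha-\beta(p+q)}$ with $\beta(p+q-1)>N+2-\alpha$ under \eqref{csol}, hence also strictly faster than $|x|^{-\beta}$. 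So for every fixed $\varepsilon>0$ and $\lambda>0$ the required pointwise inequality fails for $|x|$ large: no coupling of $\varepsilon$ and $\lambda$ rescues it. The paper avoids this by abandoning separation of variables and taking $u(x,t)=w(x,t)^{-\beta/2}$ with $w(x,t)=(1+t)^{-a}+1+|x|^2$. The point of this profile is that $\partial_t^2 u\ge -\tfrac{a(a+1)\beta}{2}\,w^{-(\beta+2)/2}$, i.e.\ the negative contribution of the time derivative is of the \emph{same} order $w^{-(\beta+2)/2}$ as $-\Delta u$, not of the order of $u$ itself; choosing $a$ small then absorbs it into $\beta(N-2-\beta)w^{-(\beta+2)/2}$. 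The convolution term is controlled by an explicit estimate (Lemma \ref{inte}, a uniform-in-$\lambda$ version of the Riesz-potential bounds, rather than a citation of \cite{GMM2021}), and a final multiplicative rescaling $u\mapsto Mu$ with $M=(\max\{C_1,C_2\})^{1/(p+q)-1}$ closes the argument; one also gets $\partial_t u>0$ for all $t\ge 0$, not just near $t=0$. To repair your proof you would need to replace the ansatz, essentially along these lines.
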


The diagram of existence/nonexistence of a weak solution to \eqref{mainh} satisfying \eqref{cintt} in the $pq$-plane is given below. The light shaded region represents the region for which \eqref{mainh} admits solutions satisfying \eqref{cintt} while the dark shaded region describes the pairs $(p,q)$ for which no such solutions exist. Corollary \ref{cor2} leaves open the isue of existence and nonexistence in the white regions of the $(p>0, q>0)$ quadrant.

\begin{figure}[h]
\begin{center}
\includegraphics*[width=.9\linewidth]{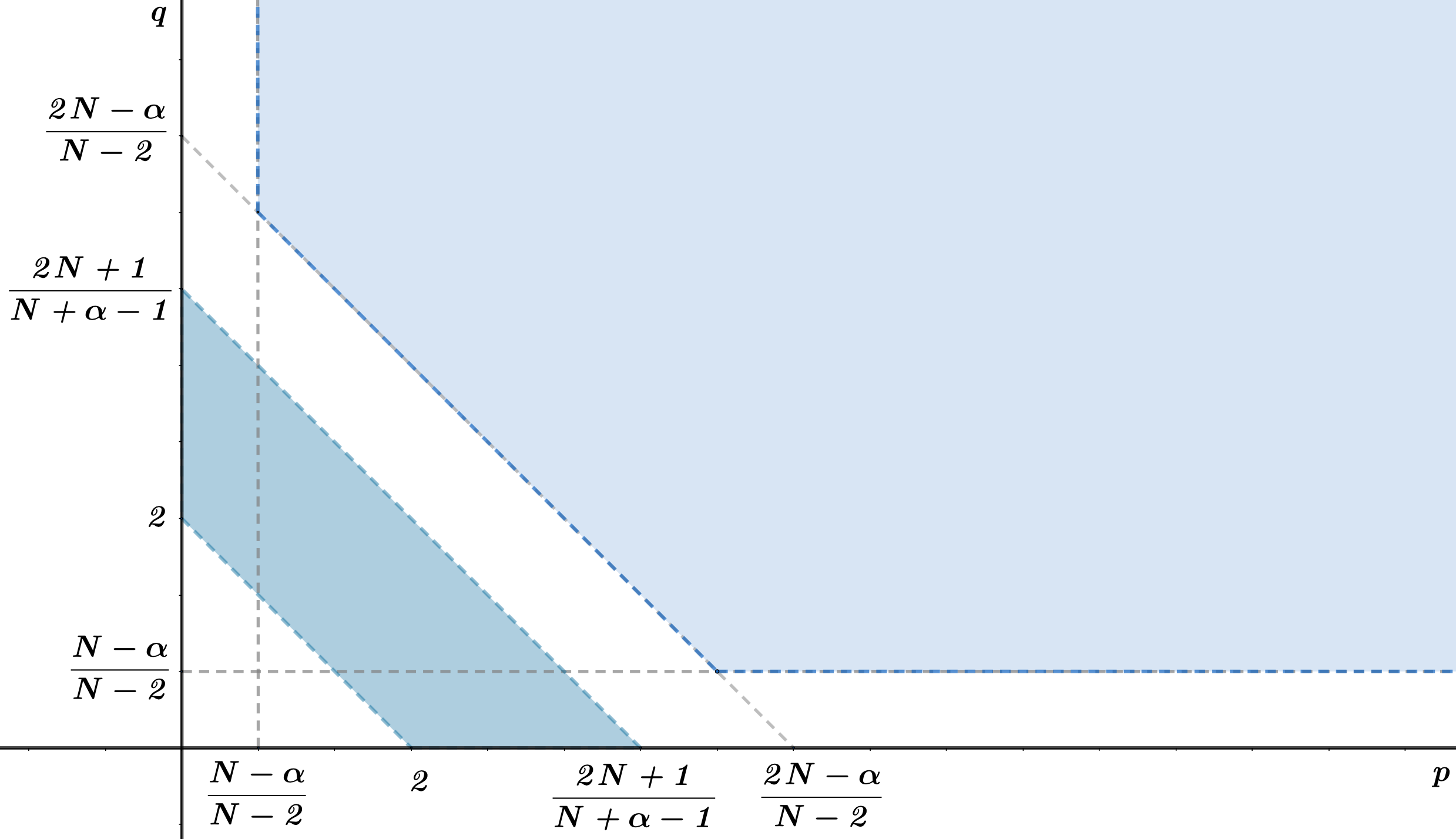} 
\end{center}
\end{figure}

Theorem \ref{thmain1} also applies to the case where $K\equiv 1$ for  which \eqref{main} reads
\begin{equation}\label{maink1}
\begin{cases}
&\displaystyle \frac{\partial^k u}{\partial t^k}-\Delta ^m u\geq  \Big(\int_{\mathbb R^N} |u(y)|^pdy\Big) |u|^q \quad\mbox{ in } \mathbb R^{N+1}_+,\\[0.1in]
&\displaystyle \frac{\partial^i u}{\partial t^i}(x,0) = u_i(x) \,\, \text{ in } \mathbb R^N,\, 0\leq i\leq k-1.\\

\end{cases}
\end{equation}

 A similar conclusion to Corollary \ref{cor1} and Corollary \ref{cor2} (in which we let $\alpha=0$) hold for \eqref{maink1}.

We can further employ the ideas in the study of \eqref{main} to the case of systems of type

\begin{equation}\label{mains}
\begin{cases}
&\displaystyle \frac{\partial^{k} u}{\partial t^{k}}+(-\Delta)^m u\geq  (K\ast |v|^p)|v|^q \quad\mbox{ in } \mathbb R^{N+1}_+=\mathbb R^N \times \mathbb (0,\infty),\\[0.1in]
&\displaystyle \frac{\partial^{k} v}{\partial t^{k}}+(-\Delta)^m v\geq  (L\ast |u|^n)|u|^s \quad\mbox{ in } \mathbb R^{N+1}_+,\\[0.1in]
&\displaystyle \frac{\partial^i u}{\partial t^i}(x,0) = u_i(x) \,\, \text{ in } \mathbb R^N,\, 0\leq i\leq k-1,\\[0.1in]
& \displaystyle \frac{\partial^i v}{\partial t^i}(x,0) = v_i(x) \,\, \text{ in } \mathbb R^N,\, 0\leq i\leq k-1,\\
\end{cases}
\end{equation}
where $N,m,k\geq 1$,   $p,q,n,s>0$. We assume $u_i, v_i\in L^1_{\rm loc}(\mathbb{R}^N)$, $0\leq i\leq k-1$ and that $K,L$ satisfy condition $(A)$ for some $R_0>1$.

A pair $(u, v)\in  {L}^{n}_{\rm loc}(\mathbb R^{N+1}_+)\times {L}^{p}_{\rm loc}(\mathbb R^{N+1}_+)$ is a {\it weak solution} of \eqref{mains} if:
\begin{enumerate}
\item[(i)] $(L\ast u^n)u^s, (K\ast v^p)v^q   \in  {L}^1_{\rm loc}(\mathbb R^{N+1}_+)$;

\item[(ii)]  for any nonnegative test function  $\varphi \in C^{\infty}_{c}(\mathbb R^N\times \mathbb R),$ we have
\end{enumerate}
\begin{equation} \label{weak_forms1}
\left\{
\begin{aligned}
\sum_{i=1}^{k}\! (-1)^i \!\! \int\limits_{\mathbb R^N}u_{k-i}(x)\frac{\partial^{i-1} \varphi}{\partial t^{i-1}}(x,0)dx +\int\limits_{{\mathbb R^{N+1}_+}} u \Big[ (-1)^k\frac{\partial^k \varphi}{\partial t^k}& +(-\Delta)^m \varphi \Big]\, dxdt \\& \ge \int\limits_{{\mathbb R^{N+1}_+}} (K\ast |v|^p)|v|^q  \varphi \, dxdt,\\
\sum_{i=1}^{k}\! (-1)^i \!\! \int\limits_{\mathbb R^N}v_{k-i}(x)\frac{\partial^{i-1} \varphi}{\partial t^{i-1}}(x,0)dx  
+\int\limits_{{\mathbb R^{N+1}_+}} v  \Big[ (-1)^k\frac{\partial^k \varphi}{\partial t^k} & +(-\Delta)^m \varphi \Big]\, dxdt \\& \ge \int\limits_{{\mathbb R^{N+1}_+}} (L\ast |u|^n)|u|^s  \varphi \, dxdt.
\end{aligned}
\right.
\end{equation}

Our main result concerning \eqref{weak_forms1} is stated below.
\begin{theorem}\label{thmainsystem}
Assume $\min\{p+q, n+s\}>2$. If either
\begin{equation} \label{Ksup1}
\limsup_{R\to\infty}K(R)L(R)^{ \frac{1}{n+s}} R^{ \frac{2N+2m/k}{(n+s)(p+q)}+\frac{N+2m}{n+s}-N+2m\big(1-\frac{1}{k}\big) }>0,
\end{equation}
or
\begin{equation} \label{Ksup2}
\limsup_{R\to\infty}K(R)^{\frac{1}{p+q} }L(R)  R^{ \frac{2N+2m/k}{(n+s)(p+q)}+\frac{N+2m}{p+q}-N+2m\big(1-\frac{1}{k}\big) }>0,
\end{equation}
then \eqref{weak_forms1} has no nontrivial  solutions such that
\begin{equation} \label{int_uv}
u_{k-1},v_{k-1}\geq 0\quad\mbox{ or }\quad u_{k-1}, v_{k-1}\in L^1(\mathbb R^N)\quad \mbox{ and } \; 
\int_{\mathbb R^N} u_{k-1}(x)dx, \, \int_{\mathbb R^N} v_{k-1}(x)dx>0. 
\end{equation}

\end{theorem}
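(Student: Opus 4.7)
The proof extends the nonlinear capacity strategy of Theorem \ref{thmain1} to the coupled system. I use the same test function $\varphi(x,t)=\phi(|x|/R)^{\lambda}\psi(t/T)^{\lambda}$ as in \eqref{test_function}, with $T=R^{2m/k}$ (matching the scaling of $\partial_t^k+(-\Delta)^m$) and $\lambda$ large enough that $|G\varphi|\,\varphi^{-\gamma}$ stays bounded for the relevant exponents $\gamma<1$, where $G\varphi=(-1)^k\partial_t^k\varphi+(-\Delta)^m\varphi$. Choosing $\psi\in C^\infty([0,\infty))$ compactly supported with $\psi(0)>0$ and $\psi^{(j)}(0)=0$ for $1\le j\le k-1$, the boundary sums at $t=0$ in \eqref{weak_forms1} collapse to the single terms $-\int_{\mathbb R^N} u_{k-1}\varphi(x,0)dx$ and $-\int_{\mathbb R^N} v_{k-1}\varphi(x,0)dx$; by hypothesis \eqref{int_uv} both are nonpositive (along a suitable sequence $R\to\infty$ in the $L^1$ case) and may be dropped, leaving
\[
\int\!\!\!\int (K\ast|v|^p)|v|^q\varphi\,dxdt\le\int\!\!\!\int u\,|G\varphi|\,dxdt
\]
and the symmetric inequality obtained by swapping $(u,v)$, $(K,L)$, and the nonlinearity exponents.

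Next I exploit the convolution structure. For $x,y\in B_R$ with $R>R_0$ we have $|x-y|\le 2R$, so assumption $(A)$ gives $K(|x-y|)\ge K(2R)$ and hence $(K\ast|v|^p)(x,t)\ge K(2R)\int_{B_R}|v(y,t)|^p dy$ for $x\in B_R$. Combining with the Cauchy--Schwarz inequality
\[
\Big(\int_{B_R}|v|^{(p+q)/2}\sqrt{\varphi}\,dx\Big)^2\le \int_{B_R}|v|^p\,dx\cdot\int_{B_R}|v|^q\varphi\,dx,
\]
I obtain a lower bound of the form $K(2R)\int_0^T J_v(t)^2 dt$ for the left-hand side of the first weak inequality, where $J_v(t)=\int_{B_R}|v(x,t)|^{(p+q)/2}\sqrt{\varphi(x,t)}\,dx$; symmetrically the second weak inequality yields $L(2R)\int_0^T J_u(t)^2 dt$ as a lower bound, with $J_u$ built from $u$ and exponent $(n+s)/2$.

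For the upper bounds, H\"older's inequality with exponents $n+s$ and $(n+s)'$ gives
\[
\int\!\!\!\int u\,|G\varphi|\,dxdt\le\Big(\int\!\!\!\int u^{n+s}\varphi\,dxdt\Big)^{1/(n+s)}\mathcal{A}_{n+s}(R),
\]
where $\mathcal{A}_{n+s}(R)=(\int\!\!\int |G\varphi|^{(n+s)'}\varphi^{-1/(n+s-1)})^{1/(n+s)'}$ is a pure $\varphi$-integral which, thanks to $T=R^{2m/k}$, scales as an explicit power of $R$; an analogous bound holds for $\int\!\!\int v\,|G\varphi|$ with exponent $p+q$. The coupling of the two equations is then closed as follows: I replace $\int\!\!\int u^{n+s}\varphi$ by the convolution/Cauchy--Schwarz lower bound extracted from $(L\ast|u|^n)|u|^s\varphi$ via the second weak inequality, which in turn is bounded above via H\"older applied to $\int\!\!\int v\,|G\varphi|$.

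The outcome is a self-bounding estimate of the schematic form $\mathcal I\le C\,R^{\theta}K(2R)^{-a}L(2R)^{-b}\mathcal I^{\sigma}$ with $\sigma<1$, yielding $\mathcal I\le C\,R^{\theta/(1-\sigma)}K(2R)^{-a/(1-\sigma)}L(2R)^{-b/(1-\sigma)}$; a direct calculation shows that the composite exponents reduce precisely to those of \eqref{Ksup1}. Under that hypothesis the right-hand side vanishes along a subsequence $R_j\to\infty$, forcing $\mathcal I=0$ and hence $v\equiv 0$; plugging this into either weak inequality then gives $u\equiv 0$ as well. Running the argument with the roles of the two equations interchanged (starting the H\"older chain from the $u$-equation rather than the $v$-equation) delivers nonexistence under \eqref{Ksup2}. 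The main technical obstacle is this bookkeeping: verifying that the composite powers of $R$, $K(2R)$, $L(2R)$ arising from iterating H\"older together with the convolution lower bounds simplify exactly to the quantities $\frac{2N+2m/k}{(n+s)(p+q)}+\frac{N+2m}{n+s}-N+2m(1-\tfrac{1}{k})$ and its dual appearing in \eqref{Ksup1} and \eqref{Ksup2}.
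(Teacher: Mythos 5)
Your overall strategy is the paper's: the rescaled test function with $T=R^{2m/k}$, the convolution--Cauchy--Schwarz lower bound producing $K(4R)\int J^2$ and $L(4R)\int I^2$, and the substitution of one coupled estimate into the other to obtain a self-bounding inequality with exponent $\sigma=\frac{1}{(n+s)(p+q)}<1$. However, there are two concrete problems with the way you close the argument.

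First, your H\"older exponents do not match the quantity controlled by the convolution lower bound. You split $\iint u\,|G\varphi|$ with exponents $n+s$ and $(n+s)'$, which produces $\bigl(\iint |u|^{n+s}\varphi\bigr)^{1/(n+s)}$; but the lower bound you extracted from $(L\ast|u|^n)|u|^s\varphi$ controls $\int_0^T J_u(t)^2\,dt$ with $J_u(t)=\int |u|^{(n+s)/2}(\cdot)\,dx$, and there is no inequality in the needed direction between $\iint|u|^{n+s}\varphi$ and $\int J_u^2\,dt$ (Cauchy--Schwarz gives $\int J_u^2\,dt\le CR^N\iint|u|^{n+s}\varphi$, the wrong way, and $(L\ast|u|^n)|u|^s$ does not dominate $|u|^{n+s}$ pointwise). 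The correct splitting is $|u|\varphi^{2/(n+s)}\cdot\varphi^{-2/(n+s)}|G\varphi|$ with exponents $\frac{n+s}{2}$ and $\frac{n+s}{n+s-2}$ (this is where $n+s>2$ enters), followed by a further H\"older inequality in $t$ over the interval of length $\sim R^{2m/k}$ to produce $\bigl(\int I^2\,dt\bigr)^{1/(n+s)}$; the extra factor $R^{\frac{2m/k}{n+s}}$ from that second H\"older step is precisely what turns $\frac{2N}{n+s}$ into $\frac{2N+2m/k}{n+s}$ in \eqref{Ksup1}.

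Second, your conclusion ``under that hypothesis the right-hand side vanishes along a subsequence'' is only valid when the $\limsup$ in \eqref{Ksup1} equals $+\infty$. Since \eqref{Ksup1} only asserts positivity, the $\limsup$ may be a finite number $\ell\in(0,\infty)$, in which case your bound $\mathcal I\le C R^{\theta/(1-\sigma)}K^{-a/(1-\sigma)}L^{-b/(1-\sigma)}$ converges to a finite positive constant and yields only $J\in L^2(0,\infty)$, not $J\equiv0$. The missing step is to use this square-integrability together with the fact that $\partial_t^k\varphi$ and $\Delta^m\varphi$ are supported in $B_{2R}\times[R^\gamma,2R^\gamma]$ and $(B_{2R}\setminus B_R)\times[0,2R^\gamma]$ respectively: along a sequence $R_j$ with $R_j>2R_{j-1}$ the contributions from these sets are $o(1)$ (the annular pieces form a convergent series, the late-time pieces are tails of a convergent integral), which upgrades the self-bounding estimate to $\mathcal I^{1-\sigma}\le C\ell^{-1}o(1)$ and forces $v\equiv0$, then $u\equiv0$. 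Without this refinement the critical case of the theorem is not proved.
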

The next sections contain the proofs of the above results.

\section{Proof of Theorem \ref{thmain1}}

{\bf Proof of Theorem 1} (i)  Let $\gamma>0$ be such that $p\gamma>N$ and define
\begin{equation}\label{equu}
u(x,t)=e^{-Mt}v(x)^{-\gamma/2}\quad\mbox{ with }\quad v(x)=1+|x|^2,
\end{equation}
where $M>1$ will be specified later. Since $K>0$ is continuous in $\mathbb R^+$ and  decreasing in a neighbourhood of infinity (by condition $(A)$) it follows that 
$$
\sup_{[1,\infty)}K=\max_{[1,\infty)}K<\infty.
$$
Furthermore, we have 
\begin{equation}\label{kest1}
\begin{aligned}
\big(K\ast v^{-p\gamma/2}\big)(x)&=\int_{\mathbb R^N}K(|z|)(1+|z-x|^2)^{-p\gamma/2}dz\\
&\leq  \int_{B_1}K(|z|)dz+\big(\max_{[1,\infty)}K\big) \int_{\mathbb R^N\setminus B_1 }(1+|z-x|^2)^{-p\gamma/2}dz\\
&\leq  \int_{B_{1}}K(|z|)dz+\big(\max_{[1,\infty)}K\big) \int_{\mathbb R^N} (1+|z-x|^2)^{-p\gamma/2}dz\\
&=  \int_{B_{1}}K(|z|)dz+\big(\max_{[1,\infty)}K\big) \int_{\mathbb R^N} (1+|y|^2)^{-p\gamma/2}dy\\
&\leq C(K,p,\gamma), 
\end{aligned}
\end{equation}
since  $p\gamma>N$.

Further, a direct calculation shows that $-\Delta \big(v^{-\gamma/2}\big)=c_1 v^{-\gamma/2-1}+c_2 v^{-\gamma/2-2}$ in $\mathbb R^N$, where $c_1, c_2$ are real coefficients depending on $N$ and $\gamma$. Hence, an induction argument yields
$$ (-\Delta)^m \bigl(v^{-\gamma/2}\bigr)=v^{-2m-\gamma/2}\sum_{j=0}^{m}c_j v^{j}\quad \mbox{ in }\mathbb R^N,$$
where $c_j=c_j(\gamma,N,m)\in\mathbb R$. Thus, the function $u$ given by \eqref{equu} satisfies
\begin{equation}\label{kest2}
\frac{\partial^k u}{\partial t^k} +(-\Delta)^m u=e^{-Mt}\Big((-1)^kM^k v^{2m}+\sum_{j=0}^{m}c_jv^j\Big) v^{-2m-\gamma/2}\quad\mbox{ in }\mathbb R^{N+1}_+,
\end{equation}
where $c_j\in \mathbb{R}$ are independent of $M$.  Using the fact that $k$ is an even integer, by taking $M>1$ large, we may ensure that 
\begin{equation}\label{kest3}
M^k v^{2m}+\sum_{j=0}^{m}c_jv^j\geq C(K,p,\gamma)v^{2m} \quad\mbox{ in }\mathbb R^N,
\end{equation}
where $C(K,p,\gamma)>0$ is the constant from \eqref{kest1}.
Now, combining \eqref{kest1}-\eqref{kest3} and using that $u^p e^{Mpt}=v^{-p\gamma/2}$, we deduce
$$
\begin{aligned}
\frac{\partial^k u}{\partial t^k}-\Delta^m u &\geq C(K,p,\gamma) e^{-Mt}v^{-\gamma/2}\\
&\geq C(K,p,\gamma) e^{-(p+q)Mt}v^{-\gamma/2}\\
&\geq \big(K\ast u^{p}\big) e^{-qMt}v^{-\gamma/2}\\
&\geq \big(K\ast u^{p}\big) \Big(e^{-Mt}v^{-\gamma/2})^q\\
&\geq \big(K\ast u^{p}\big) u^q\quad\mbox{ in }\mathbb R^{N+1}_+,
\end{aligned}
$$
being  $q\ge1$, which concludes the proof of part (i). Since $k$ is even, one has
 $$u_{k-1}(x)=\frac{\partial^{k-1} u}{\partial t^{k-1}}(x,0) =(-1)^{k-1}M^kv(x)<0.$$

(ii) Assume that $p+q>2$, \eqref{K_limsup} and \eqref{int_u1}  hold and that  \eqref{main} admits a weak solution $u$. We shall first construct a suitable test function $\varphi$ for \eqref{weak_form}. To do so,  take a standard cut off function $\varrho \in C^\infty_c(\mathbb R)$ 
such that:
\begin{itemize}
\item $\varrho= 1$ in $(0,1)$, $\varrho= 0$  in $(2,\infty)$;
\item  $0\le \varrho\le 1$, $\text{supp}\varrho \subseteq[0,2]$.
\end{itemize}

Now take $R > 0$,  $\gamma>0$ to be precised later, $\kappa\ge1$ sufficiently large and consider the function
\begin{equation} \label{test_function}
\varphi(x) =\varrho^\kappa \biggl(\frac{|x|}{R}\biggr)\varrho^\kappa\biggl(\frac{t}{R^{\gamma}} \biggr) 
\quad\text{in}\quad \mathbb R^{N+1}_+ .
\end{equation}
Clearly 
\begin{equation} \label{supp_rho}\text{supp\,}\varphi\subset B_{2R}\times [0,2R^\gamma)\subset \mathbb R^{N+1}_+,
\end{equation}
and
\begin{equation}\label{supp_der_varphi}\text{supp\,}\frac{\partial^k\varphi}{\partial t^k}\subset  B_{2R}\times [R^\gamma,2R^\gamma)\subset \text{supp\,}\varphi, 
\qquad \text{supp\,}\Delta\varphi\subset (B_{2R}\setminus B_R)\times [0,2R^\gamma)\subset \text{supp\,}\varphi.
\end{equation}
As in  \cite[Lemma 3.1]{FL},  we have the following estimates:
\begin{proposition}
Let $\varphi$ be defined in \eqref{test_function}. Then for $\varsigma>1$ and $\kappa\ge 2m\varsigma$ one has
\begin{equation} \label{prop1_test}
\int_{\text{\rm supp}\, \varphi} \frac{1}{\varphi^{\varsigma-1}}    \Big|\frac{\partial^i \varphi}{\partial t^i}\Big|^\varsigma dxdt\le C R^{N+\gamma-i\gamma\varsigma },\quad i\geq 1,
\end{equation}
\begin{equation} \label{prop2_test}
\int_{ \text{\rm supp}\,\varphi}\frac{|\Delta^m\varphi|^\varsigma}{\varphi^{\varsigma-1}}dxdt \le C R^{N+\gamma-2m\varsigma },
\end{equation}
where $C$ is a positive constant changing from line to line.
\end{proposition}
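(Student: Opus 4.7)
The plan is to exploit the product structure $\varphi(x,t) = \varrho^\kappa(|x|/R)\,\varrho^\kappa(t/R^\gamma)$: time derivatives touch only the second factor and $\Delta^m$ only the first. In both parts I would carry out the differentiation carefully, express the result as $R^{-(\text{scale})}$ times a finite sum of terms of the form (constant)$\cdot \varrho^{\kappa-j}\cdot$(bounded), then use $0\le \varrho \le 1$ to dominate by the lowest power of $\varrho$ that appears, verify that the hypothesis on $\kappa$ makes the resulting integrand bounded, and finally rescale via $y=x/R$, $s=t/R^\gamma$ (so that $dx\,dt = R^{N+\gamma}\,dy\,ds$) to absorb the $R$-dependence into a universal constant.

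For \eqref{prop1_test}, I would apply Fa\`a di Bruno to $\partial_t^i[\varrho^\kappa(t/R^\gamma)]$ and write it as $R^{-i\gamma}$ times a finite sum $\sum_{j=1}^{i} c_j\,\varrho^{\kappa-j}(t/R^\gamma)\,Q_j(t/R^\gamma)$, where each $Q_j$ is a bounded polynomial in the derivatives of $\varrho$. Taking the $\varsigma$-th power, multiplying by the (untouched) spatial factor and dividing by $\varphi^{\varsigma-1}$, the exponents of $\varrho$ collapse to give
\begin{equation*}
\frac{|\partial_t^i \varphi|^\varsigma}{\varphi^{\varsigma-1}} \le C\,R^{-i\gamma\varsigma}\,\varrho^{\kappa}(|x|/R)\,\varrho^{\kappa - i\varsigma}(t/R^\gamma),
\end{equation*}
which is a continuous function with compact support under the hypothesis $\kappa \ge 2m\varsigma$ (in the range of $i$ used in the applications of this proposition). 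Integrating over $B_{2R}\times(0,2R^\gamma)$ and rescaling then yields $C R^{N+\gamma - i\gamma\varsigma}$.

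For \eqref{prop2_test}, the key ingredient is an induction on $m$ showing that
\begin{equation*}
\Delta^m\bigl[\varrho^\kappa(|x|/R)\bigr] = R^{-2m}\sum_{j=1}^{2m} \varrho^{\kappa-j}(|x|/R)\,\Psi_j(x/R),
\end{equation*}
where each $\Psi_j$ is smooth, bounded, and supported in $\{1\le |y| \le 2\}$ (reflecting the fact, already noted in \eqref{supp_der_varphi}, that $\Delta^m \varphi$ is supported in $(B_{2R}\setminus B_R)\times[0,2R^\gamma)$ because $\varrho^\kappa(|x|/R)$ is constant on $|x|<R$ and vanishes on $|x|>2R$). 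Using $\varrho\le 1$ to dominate the sum by $\varrho^{\kappa-2m}$ and combining with the temporal factor yields
\begin{equation*}
\frac{|\Delta^m \varphi|^\varsigma}{\varphi^{\varsigma-1}} \le C\,R^{-2m\varsigma}\,\varrho^{\kappa - 2m\varsigma}(|x|/R)\,\varrho^{\kappa}(t/R^\gamma),
\end{equation*}
whose boundedness is exactly what $\kappa\ge 2m\varsigma$ secures. Rescaling over the support $(B_{2R}\setminus B_R)\times(0,2R^\gamma)$ produces the claimed $CR^{N+\gamma - 2m\varsigma}$.

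The scaling and the elementary exponent arithmetic (most cleanly $(\kappa-2m)\varsigma - \kappa(\varsigma-1) = \kappa - 2m\varsigma$) are routine; the main bookkeeping obstacle is the induction describing $\Delta^m[\varrho^\kappa(|x|/R)]$, where one must track both the $R^{-2m}$ scaling and the fact that the \emph{lowest} power of $\varrho$ appearing is $\varrho^{\kappa-2m}$, so that the single threshold $\kappa\ge 2m\varsigma$ governs the finiteness of both weighted integrals.
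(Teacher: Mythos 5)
Your argument is correct and is exactly the standard scaling/Fa\`a di Bruno computation behind this estimate; the paper itself gives no proof and simply cites \cite[Lemma 3.1]{FL}, whose proof proceeds the same way (product structure, lowest power $\varrho^{\kappa-j}$ with $j\le i$ or $j\le 2m$, the exponent identity $(\kappa-2m)\varsigma-\kappa(\varsigma-1)=\kappa-2m\varsigma$, and the substitution $y=x/R$, $s=t/R^{\gamma}$). Your parenthetical caveat is the right one to flag: for \eqref{prop1_test} with arbitrary $i$ the argument really needs $\kappa\ge i\varsigma$ rather than $\kappa\ge 2m\varsigma$, which is harmless here because the proposition is only invoked with $i=k$ and $\gamma=2m/k$ and the paper takes ``$\kappa\ge 1$ sufficiently large'' in \eqref{test_function}.
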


Since  $\frac{\partial^i \varphi}{\partial t^i}(x,0)=0$ in $\mathbb R^N$ for all $i\geq 1$,  from \eqref{weak_form} we deduce 
\begin{equation} \label{weak_0}\begin{aligned}
\int_{\mathbb R^N}u_{k-1}(x)\varphi(x,0)\, &dx+\int_{\mathbb R^{N+1}_+} (K\ast |u|^p)|u|^q  \varphi \, dxdt \\&\le 
\int_{\mathbb R^{N+1}_+}|u| \Big|\frac{\partial^k \varphi}{\partial t^k} \Big|\, dxdt+
\int_{\mathbb R^{N+1}_+} |u| |\Delta^m\varphi|\, dx dt.
\end{aligned}\end{equation}
Observe that by \eqref{int_u1} we have $u_{k-1}\geq 0$ or $u_{k-1}\in L^1(\mathbb R^N)$ and $\int_{\mathbb R^N} u_{k-1}(x)dx>0$. In the latter case, from $u_{k-1}\in L^1_{loc}(\mathbb R^N)$, we deduce, using \eqref{test_function},

\begin{equation} \label{sign_u1_rho}
\begin{aligned}
\lim_{R\to\infty} \int_{\mathbb R^N}u_{k-1}(x)\varphi(x,0)dx&=\lim_{R\to\infty} \int_{\mathbb R^N}u_{k-1}(x)\varrho^{\kappa}\biggl(\frac{|x|}{R}\biggr)dx=\int_{\mathbb R^N}u_{k-1}(x)dx.
\end{aligned}\end{equation}

Thus, from \eqref{int_u1} we deduce  that for large $R>0$ we have
\begin{equation}\label{hyp_ge0}
\int_{\mathbb R^N}u_{k-1}(x)\varphi(x,0)dx\geq 0, 
\end{equation}
case in which \eqref{weak_0} yields
\begin{equation} \label{weak}
\int_{\mathbb R^{N+1}_+} (K\ast |u|^p)|u|^q  \varphi \, dxdt \le 
\int_{\mathbb R^{N+1}_+}|u| \Big|\frac{\partial^k \varphi}{\partial t^k} \Big|\, dxdt+
\int_{\mathbb R^{N+1}_+} |u| |\Delta^m\varphi|\, dx dt,
\end{equation}
provided $R>0$ in the definition of $\varphi$ (see \eqref{test_function}) is large enough.

An important tool  of our approach is the following result.

\begin{lemma}\label{lk} 
For almost all $t\geq 0$ we have $u(\cdot, t)\in L^{\frac{p+q}{2}}_{loc}(\mathbb R^N)$ and 
\begin{equation}\label{J^2_1}
\int_{\mathbb R^N}(K\ast |u|^p)|u(x,t)|^q  \varphi(x,t) dx\ge K(4R) J^2(t) \quad\mbox{ for all }\;R\geq R_0,
\end{equation}
where
\begin{equation}\label{JJ}
J(t)=\int_{\mathbb R^N}|u(x,t)|^{\frac{p+q}2} \varphi(x,t) dx.
\end{equation}
\end{lemma}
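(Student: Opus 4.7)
The plan is to establish the pointwise bound $K(|x-y|)\geq K(4R)$ on the support of the relevant double integrand, and then use a symmetrization-plus-AM-GM argument to factorize the resulting double integral as $J(t)^2$. I would start by writing
$$
\int_{\mathbb R^N}(K\ast |u|^p)(x,t)|u(x,t)|^q \varphi(x,t)\, dx = \int_{\mathbb R^N}\int_{\mathbb R^N} K(|x-y|)|u(y,t)|^p|u(x,t)|^q \varphi(x,t)\, dy\, dx.
$$
Since $0\leq\varphi\leq 1$, inserting the extra factor $\varphi(y,t)$ only weakens the estimate. The support property \eqref{supp_rho} then forces $x,y\in B_{2R}$, so $|x-y|\leq 4R$; because $R\geq R_0>1$ we have $4R>R_0$, and condition $(A)$ gives $K(|x-y|)\geq K(4R)$ for a.e.\ such pair.

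After pulling out $K(4R)$, I am left with the double integral
$$
A(t):=\int_{B_{2R}}\int_{B_{2R}} |u(y,t)|^p |u(x,t)|^q \varphi(x,t)\varphi(y,t)\, dx\, dy,
$$
which I would handle by the standard symmetrization trick: relabelling $x\leftrightarrow y$ shows $A(t)$ also equals the same integral with $p,q$ swapped on the $|u|$ factors. Averaging and invoking the elementary AM-GM inequality $a^p b^q + a^q b^p \geq 2(ab)^{(p+q)/2}$ (valid for $a,b\geq 0$) yields
$$
A(t) \geq \int_{B_{2R}}\int_{B_{2R}} |u(x,t)|^{(p+q)/2}|u(y,t)|^{(p+q)/2}\varphi(x,t)\varphi(y,t)\, dx\, dy = J(t)^2,
$$
giving precisely \eqref{J^2_1}.

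For the local $L^{(p+q)/2}$ membership, I would argue a posteriori from the inequality just proved: since $(K\ast|u|^p)|u|^q\in L^1_{\rm loc}(\mathbb R^{N+1}_+)$ and $\varphi$ has compact support in $\mathbb R^{N+1}_+$, the left-hand side of \eqref{J^2_1} is finite after integrating against any compactly supported weight in $t$, hence $J(t)<\infty$ for a.e.\ $t\geq 0$. Given any bounded $\Omega\subset\mathbb R^N$, choosing $R$ so large that $\varrho^\kappa(|x|/R)\equiv 1$ on $\Omega$ (and restricting $t$ to a subinterval where $\varrho^\kappa(t/R^\gamma)$ is bounded below) shows $u(\cdot,t)\in L^{(p+q)/2}(\Omega)$ for a.e.\ such $t$, and hence $u(\cdot,t)\in L^{(p+q)/2}_{\rm loc}(\mathbb R^N)$ a.e. The only non-routine step is the symmetrization: without it one would at best recover bounds of the form $\int|u|^p\cdot\int|u|^q$, which do not fit into the later capacity estimates; the use of condition $(A)$ is essentially forced once one observes that $|x-y|\leq 4R$ throughout the support.
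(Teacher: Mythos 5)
Your argument is correct and follows essentially the same route as the paper: the pointwise bound $K(|x-y|)\ge K(4R)$ via the support of $\varphi$ and condition $(A)$, followed by exploiting the $x\leftrightarrow y$ symmetry of the double integral to produce the exponent $\tfrac{p+q}{2}$, with the local integrability read off a posteriori from part (i) of the definition of a weak solution. The only cosmetic difference is that you close the symmetrization with the pointwise AM--GM inequality $a^pb^q+a^qb^p\ge 2(ab)^{(p+q)/2}$, whereas the paper applies the Cauchy--Schwarz inequality to the product of the two (equal) symmetrized integrals; the two steps are interchangeable.
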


\begin{proof} First  note that 
for $x,y\in B_{2R}$ one has $|x-y|\le 4R$, so that, thanks to the monotonicity of $K$, 
$$\int_{\mathbb R^N}K(|x-y|)|u(y)|^p dy\ge \int_{B_{2R}}K(|x-y|)|u(y)|^p dy\ge 
K(4R)\int_{B_{2R}}|u(y)|^p  dy.$$
Hence
\begin{equation}\label{summ}\int_{\mathbb R^N}(K\ast |u|^p)|u(x,t)|^q \varphi(x,t) dx\ge K(4R) \int_{\mathbb R^N}\int_{\mathbb R^N}
|u(y,t)|^p\varphi(y,t) |u(x,t)|^q \varphi(x,t) dxdy,
\end{equation}
where we have used that $\varphi\le1$ and that $\varphi(\cdot, t)\equiv0$ outside of $B_{2R}$ for all $t$.

Furthermore,  by H\"older's inequality we have
$$
\begin{aligned}
\biggl(\iint_{\mathbb R^N\times \mathbb R^N}|u(y,t)|^p\varphi(y,t) & |u(x,t)|^{q} \varphi(x,t)  \, dx \, dy\biggr) ^2\\
=\; &\biggl(\iint_{\mathbb R^N\times \mathbb R^N}|u(y,t)|^p\varphi(y,t)  |u(x,t)|^{q} \varphi(x,t)  \, dx \, dy\biggr)\\
\; &\cdot \biggl(\iint_{\mathbb R^N\times \mathbb R^N}|u(x,t)|^p\varphi(x,t)  |u(y,t)|^{q} \varphi(y,t) \, dx \, dy\biggr) \\
\geq \; &
\biggl(\iint_{\mathbb R^N\times \mathbb R^N} |u(x,t)|^{\frac{p+q}{2}} |u(y,t)|^{\frac{p+q}{2}}\varphi(x,t)  \varphi(y,t) \, dx \, dy\biggr)^2\\
=\; & \biggl(\int_{\mathbb R^N} |u(x,t)|^{\frac{p+q}{2}} \varphi(x,t)  \, dx \biggr)^4=J(t)^4,
\end{aligned}
$$
which, by \eqref{summ} and part (i) in the definition of a solution,  yields $u(\cdot, t)\in L^{\frac{p+q}{2}}_{loc}(\mathbb R^N)$ and \eqref{J^2_1}.
\end{proof}

Inserting \eqref{J^2_1} into \eqref{weak} we find
\begin{equation} \label{weak_J^2}
K(4R)\int_0^{2R^\gamma}J^2(t)dt \le \int_{\mathbb R^{N+1}_+}|u|\left( \Big|\frac{\partial^k \varphi}{\partial t^k}\Big|+|\Delta^m \varphi|\right)\, dx dt.  
\end{equation}
We next estimate the integral term on the right hand side of \eqref{weak_J^2}. Using H\"older's inequality, we have
\begin{equation}\label{rho_tt<}\begin{aligned}\int_{\mathbb R^{N+1}_+} |u|\Big|\frac{\partial^k \varphi}{\partial t^k}\Big|dxdt&\le  \biggl(\int_{\text{supp} (\frac{\partial^k \varphi}{\partial t^k})} |u|^{\frac{p+q}{2}} \varphi \, dxdt\biggr)^{\frac{2}{p+q}}
\cdot \biggl(\int_{ \text{supp} (\frac{\partial^k \varphi}{\partial t^k})}\varphi^{-\frac{2}{p+q-2}} \Big|\frac{\partial^k \varphi}{\partial t^k}\Big|^{\frac{p+q}{p+q-2}}\biggr)^{\frac{p+q-2}{p+q}}
\\&\le
C \biggl(\int_{\text{supp} (\frac{\partial^k \varphi}{\partial t^k})} |u|^{\frac{p+q}{2}} \varphi \, dxdt\biggr)^{\frac{2}{p+q}} R^{-k\gamma+(N+\gamma)\frac{p+q-2}{p+q}},
\end{aligned}\end{equation}
where in the last inequality we have used \eqref{prop1_test} with $\varsigma=\frac{p+q}{p+q-2}$,  $i=k$ and thanks to \eqref{supp_der_varphi}. Note that 
\begin{equation}\label{sup1}
\text{supp} (\frac{\partial^k \varphi}{\partial t^k})\subset B_{2R}\times [R^\gamma, 2R^\gamma].
\end{equation} 
By H\"older's inequality we find
\begin{equation}\label{estimat1}
\begin{aligned}\int_{\mathbb R^{N+1}_+}|u|\Big|\frac{\partial^k \varphi}{\partial t^k}\Big|dxdt&\le  
C R^{-k\gamma+(N+\gamma)\frac{p+q-2}{p+q}} \biggl(\int_{R^\gamma}^{2R^\gamma}\Big(\int_{\mathbb R^N} |u|^{\frac{p+q}{2}} \varphi \, dx\Big)\,dt\biggr)^{\frac{2}{p+q}} \\
&\le C  R^{-k\gamma+(N+\gamma)\frac{p+q-2}{p+q}+\frac{\gamma}{p+q}} \biggl(\int_{R^\gamma}^{2R^\gamma}\Big(\int_{\mathbb R^N} |u|^{\frac{p+q}{2}} \varphi \, dx\Big)^2\,dt\biggr)^{  \frac{1}{p+q} }.
\end{aligned}\end{equation}

Similarly, by H\"older's inequality, \eqref{prop2_test} with $\varsigma=\frac{p+q}{p+q-2}$ and  \eqref{supp_der_varphi}, we find
\begin{equation}\label{Delta_m_rho<}\begin{aligned}
\int_{\mathbb R^{N+1}_+}|u||\Delta^m\varphi|dxdt&\le  \biggl(\int_{\text{supp} (\Delta^m\varphi)} |u|^{\frac{p+q}{2}} \varphi \, dxdt\biggr)^{\frac{2}{p+q}}
\cdot \biggl(\int_{\text{supp} (\Delta^m\varphi)}\varphi^{-\frac{2}{p+q-2}}  |\Delta^m\varphi|^{\frac{p+q}{p+q-2}}\biggr)^{\frac{p+q-2}{p+q}}
\\&\le
C \biggl(\int_{\text{supp} (\Delta^m\varphi)} |u|^{\frac{p+q}{2}} \varphi \, dxdt\biggr)^{\frac{2}{p+q}} R^{-2m+(N+\gamma)\frac{p+q-2}{p+q}}.
\end{aligned}\end{equation}
Since
\begin{equation}\label{sup2}
\text{supp} (\Delta^m \varphi )\subset (B_{2R}\setminus B_R)  \times [0, 2R^\gamma],
\end{equation}
a new application of H\"older's inequality in the above estimate yields
\begin{equation}\label{estimat2}
\begin{aligned}
\int_{\mathbb R^{N+1}_+} |u| |\Delta^m\varphi|dxdt&\le  
C R^{-2m+(N+\gamma)\frac{p+q-2}{p+q}} \biggl(\int_0^{2R^\gamma}\Big( \int_{B_{2R}\setminus B_R} |u|^{\frac{p+q}{2}} \varphi \, dx \Big)dt\biggr)^{\frac{2}{p+q}} \\
&\le C R^{-2m+(N+\gamma)\frac{p+q-2}{p+q}+\frac{\gamma}{p+q}} \biggl(\int_0^{2R^\gamma}\Big( \int_{B_{2R}\setminus B_R} |u|^{\frac{p+q}{2}} \varphi \, dx \Big)^2dt\biggr)^{ \frac{1}{p+q}}.
\end{aligned}\end{equation}
Comparing the powers of $R$ in \eqref{estimat1} and \eqref{estimat2} we are led to choose $\gamma>0$ so that $k\gamma=2m$, that is $\gamma=2m/k$. Also,
$$
-2m+(N+\gamma)\frac{p+q-2}{p+q}+\frac{\gamma}{p+q}=N-2m\Big(1-\frac{1}{k}\Big) -\frac{2N+\frac{2m}{k}}{p+q}.
$$
Thus, \eqref{weak_J^2} together with \eqref{estimat1} and \eqref{estimat2} yield
\begin{equation} \label{estimat3}
\begin{aligned}
K(4R)  & \int_0^{2R^{\gamma}} J^2(t)dt \le   CR^{N-2m\big(1-\frac{1}{k}\big) -\frac{2N+2m/k}{p+q}}\times \\
&\times  \left[  \biggl(\int_{R^\gamma}^{2R^\gamma}\Big(\int_{\mathbb R^N} |u|^{\frac{p+q}{2}} \varphi \, dx\Big)^2\,dt\biggr)^{\frac{1}{p+q} } + 
\biggl(\int_0^{2R^\gamma}\Big( \int_{B_{2R}\setminus B_R} |u|^{\frac{p+q}{2}} \varphi \, dx \Big)^2dt\biggr)^{\frac{1}{p+q} }\right].
\end{aligned}
\end{equation}
Using \eqref{JJ}, the above estimate implies
\begin{equation}\label{weak_weak}
K(4R)  \int_0^{2R^{\gamma}}\! J^2(t)dt \le CR^{N-2m\big(1-\frac{1}{k}\big) -\frac{2N+2m/k}{p+q}} \Big(\int_0^{2R^\gamma} J^2 (t)dt \Big)^{\frac{1}{p+q} }
\end{equation}
which further yields
\begin{equation} \label{ineq2}
\biggl(\int_0^{2R^\gamma}J^2(t)dt \biggr)^{\frac{p+q-1}{p+q} }\le C\frac 1{K(4R)R^{\frac{2N+2m/k}{p+q}-N+2m(1-1/k)} }.
\end{equation}
Let $\{R_j\}_j$ be a divergent sequence that achieves the limsup in \eqref{K_limsup}, namely
$$K(4R_j)R_j^{\frac{2N+2m/k}{p+q}-N+2m(1-1/k)}\to\ell>0\quad\text{as}\quad j\to\infty.$$
Passing to a subsequence we may assume $R_j>2R_{j-1}$ for all $j>1$.

If $\ell=\infty$, we can pass to the limit in  \eqref{ineq2}, by replacing $R$ with $R_j$, to raise
$\int_0^\infty J^2(t)dt =0$ from where $J\equiv0$ in $\mathbb R^+$ and then, by the definition of $J$,  
$$\int_{B_R\times(0,\infty)}|u|^{(p+q)/2}dxdt=0 \quad\text{for all}\quad  R>0,$$ namely $u\equiv0$  in $\mathbb R^{N+1}_+$ as required.

If $\ell\in(0,\infty)$, then \eqref{ineq2} shows that $J\in L^2(0,\infty)$. Using this fact we infer that
\begin{equation}\label{eqeq}
\int_{R_j^m}^{2R_j^{m}}\!\! \Big(\int_{\mathbb R^N} |u(x,t)|^{\frac{p+q}{2}}\varphi(x,t) dx\Big)^2 dt\to 0\,,\;
\int_{0}^{2R_j^{m}} \!\!\Big(\int_{B_{2R_j}\setminus B_{R_j} } |u(x,t)|^{\frac{p+q}{2}}\varphi(x,t) dx\Big)^2 dt\to 0, 
\end{equation}
as $j\to \infty$. 
Indeed, the first limit in \eqref{eqeq} follows immediately by the fact that $J\in L^2(0,\infty)$. To check the second limit in \eqref{eqeq} we observe that
$$
\begin{aligned}
\infty>\int_0^\infty \Big(\int_{\mathbb R^N} |u(x,t)|^{\frac{p+q}{2}}dx\Big)^2 dt &\geq 
\int_0^\infty \Big(\sum_{j\geq 1}\int_{B_{2R_j}\setminus B_{R_j}} |u(x,t)|^{\frac{p+q}{2}}dx\Big)^2 dt\\
&\geq \int_0^\infty \sum_{j\geq 1}\Big(\int_{B_{2R_j}\setminus B_{R_j}} |u(x,t)|^{\frac{p+q}{2}}dx\Big)^2 dt\\
&= \sum_{j\geq 1}\int_0^\infty \Big(\int_{B_{2R_j}\setminus B_{R_j}} |u(x,t)|^{\frac{p+q}{2}}dx\Big)^2 dt.
\end{aligned}
$$
The convergence of the last series in the above estimate implies the second part of \eqref{eqeq}.
Using this fact in \eqref{estimat3} we find 
$$
\biggl(\int_0^{2R_j^\gamma}J^2(t)dt \biggr)^{\frac{p+q-2}{p+q}} \le C\frac 1{K(4R_j)R_j^{\frac{2N+2m/k}{p+q}-N+2m(1-1/k)} }\, o(1) \quad\mbox{ as  }j\to \infty.
$$
Again, by letting $R_j\to\infty$, we can conclude, also when $\ell\in(0,\infty)$, that $\int_0^\infty J^2(t)dt =0$, namely
$u\equiv0$  in $\mathbb R^{N+1}_+$.\qed

\medskip

A similar argument allows us to treat the case $\int_{\mathbb R^N} u_{k-1}(x)dx=0$. In this case we need to be more precise on the behavior of $u_{k-1}$. 

\begin{proposition}\label{lo}
 Let  $\varrho \in C^\infty_c(\mathbb R)$ 
satisfy $\text{\rm supp }\varrho \subseteq[0,2]$, $0\le \varrho\le 1$ and   $\varrho= 1$ in $(0,1)$.

Assume that for some $\kappa\geq 2m$ we have 
\begin{equation} \label{intuu}
\int_{\mathbb R^N}u_{k-1}(x)\varrho^\kappa \left(\frac{x}{R}\right)dx=O(R^{-\beta})\quad \mbox{ as }R\to \infty,\quad\mbox{ for some }\beta>0.
\end{equation}
If 
\begin{equation}\label{conb}
\limsup_{R\to\infty}K(R)R^{\min\{\beta, \frac{2N+2m/k}{p+q}-N+2m(1-1/k)\} }>0,
\end{equation}
then \eqref{main} has no nontrivial solutions.
\end{proposition}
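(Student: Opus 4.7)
The plan is to adapt the argument of Theorem \ref{thmain1}(ii), now tracking the boundary contribution $\int_{\mathbb R^N}u_{k-1}(x)\varphi(x,0)\,dx$ instead of dropping it as nonnegative. Using the test function $\varphi$ of \eqref{test_function} with $\gamma=2m/k$ and $\kappa\ge 2m$, hypothesis \eqref{intuu} gives
$$\Big|\int_{\mathbb R^N}u_{k-1}(x)\varphi(x,0)\,dx\Big|=\Big|\int_{\mathbb R^N}u_{k-1}(x)\varrho^\kappa(|x|/R)\,dx\Big|\le CR^{-\beta}.$$
Inserting this into \eqref{weak_0}, applying Lemma \ref{lk} on the left and the bounds \eqref{estimat1}--\eqref{estimat2} on the right, and setting $A(R):=\int_0^{2R^{2m/k}}J^2(t)\,dt$ and $\mu_1:=N-2m(1-1/k)-(2N+2m/k)/(p+q)$, I expect the master inequality
$$K(4R)A(R)\le CR^{-\beta}+CR^{\mu_1}A(R)^{1/(p+q)},$$
which is the estimate from the proof of Theorem \ref{thmain1}(ii) perturbed by the additive boundary term $CR^{-\beta}$.

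Set $\mu:=\min\{\beta,-\mu_1\}$; hypothesis \eqref{conb} supplies a sequence $R_j\to\infty$ with $K(4R_j)R_j^\mu\to\ell\in(0,\infty]$. The strategy is a dichotomy on each index $j$: either the boundary term dominates, giving $A(R_j)\le C/[K(4R_j)R_j^\beta]$, or the capacity term does, giving $A(R_j)^{(p+q-1)/(p+q)}\le C/[K(4R_j)R_j^{-\mu_1}]$. Since $\beta,-\mu_1\ge\mu$, both denominators are bounded below by $K(4R_j)R_j^\mu$, so $\ell=\infty$ immediately forces $A(R_j)\to 0$ along either subsequence, hence $u\equiv 0$ in $\mathbb R^{N+1}_+$.

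When $\ell\in(0,\infty)$ the above merely gives $A(R_j)$ bounded, whence Fatou's lemma yields $J\in L^2(0,\infty)$. The plan is then to repeat the refinement from the proof of Theorem \ref{thmain1}(ii): both $\int_{R_j^\gamma}^{2R_j^\gamma}J^2\,dt$ and $\int_0^{2R_j^\gamma}\bigl(\int_{B_{2R_j}\setminus B_{R_j}}|u|^{(p+q)/2}\varphi\,dx\bigr)^2 dt$ become $o(1)$ (the latter after passing to a sub-subsequence with $R_{j+1}\ge 2R_j$ so that the annuli are disjoint). Substituting this back into \eqref{estimat3} together with the additive $CR_j^{-\beta}$ upgrades the master inequality, and whenever $\beta\ne -\mu_1$ one of $K(4R_j)R_j^\beta$ or $K(4R_j)R_j^{-\mu_1}$ still diverges, forcing $A(R_j)\to 0$.

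The main obstacle is the borderline case $\beta=-\mu_1$ with $\ell\in(0,\infty)$, where boundary and capacity contributions are of the same order along $R_j$ and neither vanishes after division by $K(4R_j)$. Here the plan is to iterate the $o(1)$ refinement: each pass of the sharpened inequality feeds an improved bound on $\sup_j A(R_j)$ into the $o(1)$ term via the monotonicity of $R\mapsto A(R)$, progressively shrinking the admissible limit of $A(R_j)$. A careful bookkeeping of the interaction between the residual $CR^{-\beta}/K(4R_j)$ and the $o(1)$ capacity contribution should drive $A(R_j)\to 0$, completing the proof.
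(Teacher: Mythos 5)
Up to the master inequality
$$
K(4R)A(R)\le CR^{-\beta}+CR^{-\sigma}A(R)^{\frac{1}{p+q}},\qquad A(R):=\int_0^{2R^{2m/k}}J^2(t)\,dt,\quad \sigma:=\frac{2N+2m/k}{p+q}-N+2m\Big(1-\frac1k\Big),
$$
your plan coincides with the paper's proof, and that part is sound. The paper then closes the argument in one stroke: writing $R^{-\sigma}A^{1/(p+q)}=R^{-\sigma}K(4R)^{-1/(p+q)}\bigl(K(4R)A\bigr)^{1/(p+q)}$ and applying Young's inequality with exponents $p+q$ and $\tfrac{p+q}{p+q-1}$, it absorbs the capacity term into $\tfrac12 K(4R)A(R)$ and obtains the closed bound
$$
A(R)\le \frac{C}{R^{\beta}K(4R)}+\frac{C}{\bigl(R^{\sigma}K(4R)\bigr)^{\frac{p+q}{p+q-1}}},
$$
from which \eqref{conb} is invoked directly — no dichotomy, no case split on $\ell$, and no $o(1)$ refinement. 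Your dichotomy is essentially a coarser substitute for this Young step and works equally well when $\ell=\infty$.

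The genuine gap is in your treatment of $\ell\in(0,\infty)$. The boundary term $CR_j^{-\beta}/K(4R_j)$ admits no $o(1)$ refinement: it comes from the fixed datum $u_{k-1}$, not from integrals of $|u|^{(p+q)/2}$ over large times or disjoint annuli, so the $J\in L^2$ trick cannot touch it. Hence when $\beta=\min\{\beta,\sigma\}$ this term converges to $C/\ell>0$ along $R_j$, and your assertion that ``whenever $\beta\ne-\mu_1$ one of $K(4R_j)R_j^{\beta}$ or $K(4R_j)R_j^{-\mu_1}$ still diverges, forcing $A(R_j)\to0$'' is false for $\beta<\sigma$: there only $K(4R_j)R_j^{\sigma}$ diverges, which controls the capacity alternative of the dichotomy but leaves $A(R_j)\le 2C/\ell$ whenever the boundary alternative occurs, so you cannot conclude $A(R_j)\to0$. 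The same non-vanishing residue defeats the proposed iteration in the borderline case $\beta=\sigma$: every pass of the sharpened inequality reproduces the identical term $CR_j^{-\beta}/K(4R_j)\to C/\ell$, so nothing shrinks and the scheme is not an argument. (For completeness: the paper's Young-inequality bound also leaves a positive residue when $\ell$ is finite — from the boundary term if $\beta\le\sigma$, from the capacity term if $\beta>\sigma$ — and simply lets $R\to\infty$; the conclusion is immediate only when the limsup in \eqref{conb} is infinite. That observation does not repair your proposal: as written, the finite-limsup case with $\beta\le\sigma$ is an unfilled gap, and your identification of the problematic exponent range is incorrect.)
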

\begin{proof}
 With the help of the above $\varrho$ we construct the test function  $\varphi$ as defined in \eqref{test_function}. Thus,  \eqref{hyp_ge0} is no more in force and \eqref{weak_0} changes to
$$
\int_{\mathbb R^{N+1}_+} (K\ast |u|^p)|u|^q  \varphi \, dxdt\le C R^{-\beta}+
\int_{\mathbb R^{N+1}_+}|u| \Big|\frac{\partial^k \varphi}{\partial t^k} \Big|\, dxdt+
\int_{\mathbb R^{N+1}_+} |u| |\Delta^m\varphi|\, dx dt .
$$
Consequently using \eqref{estimat1} and \eqref{estimat2},  with $\gamma=2k/m$, and  \eqref{J^2_1}, then  the above inequality gives
\begin{equation}\label{wea}
K(4R)  \int_0^{2R^{\gamma}}\! J^2(t)dt \le CR^{-\beta}+CR^{N-2m\big(1-\frac{1}{k}\big) -\frac{2N+2m/k}{p+q}} \Big(\int_0^{2R^\gamma} J^2 (t)dt \Big)^{\frac{\cb 1}{p+q}}.
\end{equation}
Let $\sigma=\frac{2N+2m/k}{p+q}-N+2m(1-1/k) $. 
By Young's inequality we have
$$
\begin{aligned}
CR^{N-2m\big(1-\frac{1}{k}\big) -\frac{2N+2m/k}{p+q}} \Big(\int_0^{2R^\gamma} J^2 (t)dt \Big)^{\frac{\cb 1}{p+q}}&=CR^{-\sigma} \Big(\int_0^{2R^\gamma} J^2 (t)dt \Big)^{\frac{\cb 1}{p+q}}\\
&=CR^{-\sigma} K(4R)^{-\frac{\cb 1}{p+q}}\Big(K(4R)\int_0^{2R^\gamma} J^2 (t)dt \Big)^{\frac{\cb 1 }{p+q}}\\
&\leq \frac{K(4R)}{2}\int_0^{2R^\gamma} J^2 (t)dt +CR^{-\frac{\sigma(p+q)}{p+q-\cb 1}}K(4R)^{-\frac{\cb 1}{p+q-\cb 1}}.
\end{aligned}
$$
Using this last inequality into \eqref{wea} we find
$$
\frac{K(4R)}{2}\int_0^{2R^\gamma} J^2 (t)dt \leq CR^{-\beta}+CR^{-\frac{\sigma(p+q)}{p+q-\cb 1}}K(4R)^{-\frac{\cb 1}{p+q-\cb 1}},
$$
that is,
$$
\int_0^{2R^\gamma} J^2 (t)dt \leq \frac{C}{R^{\beta}K(4R)}+\frac{C}{\Big(R^{\sigma} K(4R)\Big)^{\frac{p+q}{p+q-\cb 1}}}.
$$
Now, in virtue of \eqref{conb} we may let $R\to \infty$ in the above estimate to deduce $J=0$ and then $u\equiv 0$. 
\end{proof}

\section{Proof of Corollary \ref{cor2}}
Part (i) and (ii) in Corollary \ref{cor2} follow directly from Theorem \ref{thmain1}.  

(iii) Let $p,q,\alpha$ satisfy  \eqref{csol} which we may write as
$$
(N-2)\min\{p,q\}>N-\alpha\quad\mbox{ and }\quad (N-2)(p+q-1)>N+2-\alpha.
$$
Thus, we may choose $\beta\in (0, N-2)$ such that
\begin{equation}\label{ga1}
\beta \min\{p,q\}>N-\alpha\,, \quad \beta p\neq N  \quad\mbox{ and }\quad \beta(p+q-1)>N+2-\alpha
\end{equation}
Set $w(x,t)=(1+t)^{-a}+1+|x|^2$ where $a>0$ will be precised later and $u(x,t)=w^{-\beta/2}(x,y)$.
Then
\begin{equation}\label{ga2}
\begin{aligned}
-\Delta u&= \beta N w^{-\beta/2-1}-\beta(\beta+2)w^{-\beta/2-2}|x|^2\\&=\beta\Big[N-(\beta+2)\frac{|x|^2}{w}\Big]w^{-\beta/2-1}\ge \beta(N-\beta-2)w^{-(\beta+2)/2} \quad\mbox{ in }\mathbb R^{N+1}_+.
\end{aligned}
\end{equation}
Also,
\begin{equation}\label{ga3}
\begin{aligned}
\frac{\partial^2 u}{\partial t^2}& =-\frac{a(a+1)\beta}{2}(1+t)^{-a-2}w^{-(\beta+2)/2}+\frac{a^2\beta(\beta+2)}{4}(1+t)^{-2a-2}w^{-(\beta+2)/2-1}\\
&=\frac{a\beta}2 w^{-(\beta+2)/2}(1+t)^{-a-2}\biggl[-a-1+\frac a2(\beta+2)\frac{(1+t)^a}{w}\biggr]
\\
&\geq  -\frac{a(a+1)\beta}{2}w^{-(\beta+2)/2} \quad\mbox{ in }\mathbb R^{N+1}_+.
\end{aligned}
\end{equation}
Combining now \eqref{ga2} and \eqref{ga3} we have
$$
\frac{\partial^2 u}{\partial t^2}-\Delta u\geq \beta \Big( N-2-\beta-\frac{a(a+1)}{2}\Big) w^{-(\beta+2)/2}
\quad\mbox{ in }\mathbb R^{N+1}_+.
$$
Thus, by letting $a>0$ small enough, the above estimate yields
\begin{equation}\label{ga4}
\frac{\partial^2 u}{\partial t^2}-\Delta u\geq C w^{-(\beta+2)/2}
\quad\mbox{ in }\mathbb R^{N+1}_+, 
\end{equation}
for some constant $C>0$.
To proceed further, we need the following result.
\begin{lemma}\label{inte}
Let $\alpha\in (0, N)$, $\sigma>N-\alpha$ and $1\leq \lambda\leq 2$. Then, there exists a constant $C=C(N,\alpha, \sigma)>0$ (note that $C$ is independent of $\lambda$) such that 
$$
\int_{\mathbb R^N} \frac{dy}{|x-y|^{\alpha}(\lambda+|y|^2)^{\sigma/2}} 
\leq
C\left\{
\begin{aligned}
&(\lambda+|x|^2)^{(N-\alpha-\sigma)/2}&&\quad\mbox{ if }\sigma<N,\\
&(\lambda+|x|^2)^{-\alpha/2} &&\quad\mbox{ if }\sigma>N,
\end{aligned}
\right.
\qquad\mbox{ for all }|x|\geq 1.
$$
\end{lemma}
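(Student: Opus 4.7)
The plan is to first use that $1\le\lambda\le 2$ and $|x|\ge 1$ give $1+|y|^2\le\lambda+|y|^2\le 2(1+|y|^2)$ uniformly in $y$, and $|x|^2\le\lambda+|x|^2\le 3|x|^2$, all with constants independent of $\lambda$. This reduces the lemma to proving that
\[
I(x):=\int_{\mathbb R^N}\frac{dy}{|x-y|^{\alpha}(1+|y|^2)^{\sigma/2}}\le C|x|^{N-\alpha-\sigma}\;\;(\sigma<N),\qquad I(x)\le C|x|^{-\alpha}\;\;(\sigma>N),
\]
uniformly for $|x|\ge 1$, with $C=C(N,\alpha,\sigma)$. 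Conversion back to $(\lambda+|x|^2)^{1/2}$ at the end is immediate.

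To estimate $I(x)$ I would decompose $\mathbb R^N$ into three regions adapted to where each singular factor concentrates: $A_1=\{|y|\le|x|/2\}$ (far from $x$), $A_2=\{|x-y|\le|x|/2\}$ (near $x$, so $|y|\sim|x|$), and $A_3=\mathbb R^N\setminus(A_1\cup A_2)$. On $A_1$, $|x-y|\ge|x|/2$ and the residual integral of $(1+|y|^2)^{-\sigma/2}$ on $\{|y|\le|x|/2\}$ is bounded by a constant if $\sigma>N$, and by $C|x|^{N-\sigma}$ if $\sigma<N$. On $A_2$, the weight $(1+|y|^2)^{-\sigma/2}$ is controlled by $C|x|^{-\sigma}$ and, after translating $z=y-x$, the remaining integral of $|z|^{-\alpha}$ on $\{|z|\le|x|/2\}$ equals $C|x|^{N-\alpha}$ by $\alpha<N$, producing the bound $C|x|^{N-\alpha-\sigma}$.

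For $A_3$ I would split further according to whether $|y|\le 2|x|$ or $|y|>2|x|$. In the bounded portion $|y|\sim|x|$ and $|x-y|\le 3|x|$, so the analysis mirrors $A_2$ and yields $C|x|^{N-\alpha-\sigma}$. In the outer portion the triangle inequality gives $|x-y|\ge|y|/2$, turning the integrand into a constant multiple of $|y|^{-\alpha-\sigma}$ whose integral over $\{|y|\ge 2|x|\}$ equals $C|x|^{N-\alpha-\sigma}$ precisely because $\alpha+\sigma>N$, i.e., exactly the assumption $\sigma>N-\alpha$. This outer tail is the one point at which the hypothesis $\sigma>N-\alpha$ is actually used, and is where I expect the main (mild) technical care to be required.

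Assembling the contributions: when $\sigma<N$ every region contributes $C|x|^{N-\alpha-\sigma}$, giving the first case; when $\sigma>N$ region $A_1$ contributes $C|x|^{-\alpha}$ while the other regions give $C|x|^{N-\alpha-\sigma}\le C|x|^{-\alpha}$ (using $|x|\ge 1$ and $N-\sigma<0$), giving the second case. Because each estimate is performed after the $\lambda$-free reduction, the final constant depends only on $N,\alpha,\sigma$, as required.
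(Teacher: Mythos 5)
Your proof is correct and follows essentially the same strategy as the paper: a three-region decomposition according to the relative sizes of $|y|$, $|x|$ and $|x-y|$, with the tail $|y|\ge 2|x|$ handled via $|x-y|\ge|y|/2$ and the hypothesis $\sigma>N-\alpha$, the middle region via a pointwise bound on the weight, and the inner region $|y|\le|x|/2$ producing the case split $\sigma\lessgtr N$. The only (harmless) cosmetic differences are that you normalize $\lambda$ away at the outset, whereas the paper carries $\lambda$ through each estimate using $|x|\ge(\lambda+|x|)/3$, and that your middle region is the ball $\{|x-y|\le|x|/2\}$ rather than the annulus $\{\tfrac12|x|\le|y|\le 2|x|\}$.
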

For $\lambda=0$, similar estimates are available in  \cite[Lemma A.1]{MV2013} as well as in  \cite{GMM2021,GT2015,GT2016book}.

\begin{proof} Let $|x|\geq 1$.  We split the integral into 
\begin{equation*}
\int_{\mathbb R^N} \frac{dy}{|x-y|^{\alpha}(\lambda+|y|^2)^{\sigma/2}}   =\left\{\, \int\limits_{|y|\geq 2|x|}  +
\int\limits_{\frac12 |x| \leq |y| \leq 2|x|} + \int\limits_{|y|\leq  |x|/2} \right\} \frac{dy}{|x-y|^{\alpha}(\lambda+|y|^2)^{\sigma/2}}.
\end{equation*}
If $|y|\geq 2|x|$, then $|x-y| \geq |y|-|x| \geq |y|/2$ and we have 
\begin{equation*}
\begin{aligned}
\int\limits_{|y|\geq 2|x|} \frac{dy}{|x-y|^{\alpha}(\lambda+|y|^2)^{\sigma/2}}  
&\leq C \int\limits_{|y|\geq 2|x|} \frac{dy}{|y|^{\alpha+\sigma}}
\leq C|x|^{N-\alpha -\sigma}\\[0.1in]
&\leq C\Big(\frac{\lambda+|x|}{3}\Big)^{N-\alpha -\sigma}\\
&\leq C\big(\lambda+|x|\big)^{N-\alpha -\sigma}\\
&\leq C (\lambda+|x|^2)^{(N-\alpha-\sigma)/2},
\end{aligned}
\end{equation*}
where we have used that 
\begin{equation} \label{ineq_lambda}|x|\ge \frac{\lambda +|x|}3 \quad\text{if}\quad  |x|\ge1 \quad\text{and} \quad  \lambda\le2.\end{equation}
Next we have
\begin{align*}
\int\limits_{\frac12 |x| \leq |y| \leq 2|x|} \frac{dy}{|x-y|^{\alpha}(\lambda+|y|^2)^{\sigma/2}}  
&\leq C(\lambda+|x|^2)^{-\sigma/2}  \int\limits_{\frac12 |x| \leq |y| \leq 2|x|}
\frac{dy}{|x-y|^{\alpha}}\\
&\leq C(\lambda+|x|^2)^{-\sigma/2}  \int\limits_{ |y-x| \leq 3|x|}
\frac{dy}{|x-y|^{\alpha}}\\
&\leq C(\lambda+|x|^2)^{-\sigma/2}   |x|^{N-\alpha}\\
&\leq C(\lambda+|x|^2)^{(N-\alpha-\sigma)/2},
\end{align*}
where we have used that $|y|\le 2|x|$  implies $|y-x|\le 3|x|$.

Finally, if $|y|\leq |x|/2$ then $|x-y| \geq |x|-|y| \geq |x|/2$. We have
\begin{equation}\label{ga5}
\int\limits_{|y|\leq  |x|/2} \frac{dy}{|x-y|^{\alpha}(\lambda+|y|^2)^{\sigma/2}}  \leq
C|x|^{-\alpha}  \int\limits_{|y|\leq  |x|/2} \frac{dy}{(\lambda+|y|^2)^{\sigma/2}}.  
\end{equation}
If $\sigma<N$ then 
$$
\int\limits_{|y|\leq  |x|/2} \frac{dy}{(\lambda+|y|^2)^{\sigma/2}}\leq \int\limits_{|y|\leq  |x|/2} \frac{dy}{|y|^ \sigma}=C|x|^{N-\sigma},
$$
so that by  \eqref{ineq_lambda} and being $N-\alpha-\sigma<0$
$$
\int\limits_{|y|\leq  |x|/2} \frac{dy}{|x-y|^{\alpha}(\lambda+|y|^2)^{\sigma/2}} \leq C|x|^{N-\alpha-\sigma}
\leq C\Big(\frac{\lambda+|x|}{3}\Big)^{N-\alpha -\sigma}
\leq C (\lambda+|x|^2)^{(N-\alpha-\sigma)/2}.
$$
If $\sigma>N$ then 
$$
\int\limits_{|y|\leq  |x|/2} \frac{dy}{(\lambda+|y|^2)^{\sigma/2}}\leq \int\limits_{\mathbb R^N} \frac{dy}{(1+|y|^2)^{\sigma/2}}<C<\infty,
$$
and from \eqref{ga5} and  \eqref{ineq_lambda} one has
$$
\int\limits_{|y|\leq  |x|/2} \frac{dy}{|x-y|^{\alpha}(\lambda+|y|^2)^{\sigma/2}}  \leq
C|x|^{-\alpha} \leq C\Big(\frac{\lambda+|x|}{3}\Big)^{-\alpha}
\leq C (\lambda+|x|^2)^{-\alpha/2},
$$
which concludes our proof. 
\end{proof}
Let us return to the proof of Corollary \ref{cor2} and observe that
$$
\big(|x|^{-\alpha}*u^p\big)u^q\leq (1+|x|^2)^{-\beta q/2}\int\limits_{\mathbb R^N}\frac{dy}{|x-y|^{\alpha}(1+|y|^2)^{\beta p/2}}\quad\mbox{ for all }(x,t)\in \mathbb R^{N+1}_+,
$$
since $w(x,t)\ge 1+|x|^2$.
Since from \eqref{ga1} we have $\beta p>0$ and $\alpha<N$, the above integral is finite and thus $
\big(|x|^{-\alpha}*u^p\big)u^q$ is uniformly bounded from above for $(x,t)\in B_1\times [0,\infty)$. {In addition, since $1\le w\le3$ in $(x,t)\in B_1\times [0,\infty)$, then
by \eqref{ga4}}  $\frac{\partial^2 u}{\partial t^2}-\Delta u$ and $
\big(|x|^{-\alpha}*u^p\big)u^q$ are positive, continuous and uniformly bounded functions from below and from above respectively, on $(x,t)\in B_1\times [0,\infty)$.  We may thus find $C_1>0$ such that
\begin{equation}\label{ga6}
\frac{\partial^2 u}{\partial t^2}-\Delta u\geq C_1 \big(|x|^{-\alpha}*u^p\big)u^q\quad\mbox{ in } B_1\times [0,\infty).
\end{equation}
On the other hand, by Lemma \ref{inte} for $\lambda=1+(1+t)^{-a}$ and $\sigma=\beta p$, where $\sigma>N-\alpha$ by \eqref{ga1},  we have
$$
\big(|x|^{-\alpha}*u^p\big)u^q \leq 
C\left\{
\begin{aligned}
&w^{\frac{N-\alpha-\beta(p+q)}{2}}&&\quad\mbox{ if }\sigma=\beta p<N,\\
&w^{-\frac{\beta q+\alpha}{2}} &&\quad\mbox{ if }\sigma=\beta p>N,
\end{aligned}
\right.
\qquad\mbox{ for all }(x,t)\in (\mathbb R\setminus B_1)\times [0,\infty).
$$
Using the above estimate together with \eqref{ga1} and \eqref{ga4} we find
\begin{equation}\label{ga7}
\frac{\partial^2 u}{\partial t^2}-\Delta u\geq C w^{-(\beta+2)/2}\geq C w^{\max\{\frac{N-\alpha-\beta(p+q)}{2}, -\frac{\beta q+\alpha}{2} \}}\geq C_2 \big(|x|^{-\alpha}*u^p\big)u^q, 
\end{equation}
for all $(x,t)\in (\mathbb R\setminus B_1)\times [0,\infty)$. 
Letting now $M=\big(\max\{C_1,C_2\}\big)^{1/ (p+q )-1}$, it follows from \eqref{ga6} and \eqref{ga7} that 
$U=Mu$ is a $C^\infty(\mathbb R^{N+1}_+)$ solution of \eqref{mainh} such that 
$$
\frac{\partial U}{\partial t}(x,0)=\frac{a\beta M}{2}\big(2+|x|^2\big)^{-(\beta+2)/2}>0.
$$

\section{Proof of Theorem \ref{thmainsystem}}

Let $(u,v)$ be a  weak solution of \eqref{mains} which satisfies \eqref{int_uv}. Let $\varphi$ be the test function given by \eqref{test_function} with $\gamma=2m/k$. Arguing as in \eqref{weak_0} we have
\begin{equation} \label{weaksystem}
\left\{
\begin{aligned}
\int_{\mathbb R^N}u_{k-1}(x)\varphi(x,0)+\int_{\mathbb R^{N+1}_+} (K\ast |v|^p)|v|^q  \varphi \, dxdt & \le 
\int_{\mathbb R^{N+1}_+}u\left(  \Big|\frac{\partial^k \varphi}{\partial t^k}\Big|+|\Delta^m\varphi| \right) dx dt,  \\
\int_{\mathbb R^N}v_{k-1}(x)\varphi(x,0)+\int_{\mathbb R^{N+1}_+} (L\ast |u|^n)|u|^{ s}  \varphi \, dxdt & \le 
\int_{\mathbb R^{N+1}_+}v \left(\Big|\frac{\partial^k \varphi}{\partial t^k}\Big|dxdt+ |\Delta^m\varphi|\right) dx dt.  \\
\end{aligned}
\right.
\end{equation}
Setting
$$
\begin{aligned}
I(t)&=\int_{\mathbb R^N}|u(x,t)|^{\frac{n+s}2} \varphi(x,t) dx,\\
J(t)&=\int_{\mathbb R^N}|v(x,t)|^{\frac{p+q}2} \varphi(x,t) dx,
\end{aligned}$$
we have supp $I\subset [0, 2R^\gamma)$ and  supp $J\subset [0, 2R^\gamma)$.
With the same approach as in Lemma \ref{lk}, for $R>R_0$  we have
\begin{equation}\label{IJ}
\left\{
\begin{aligned}
\int_{\mathbb R^N}(K\ast |v|^p)|v(x,t)|^q  \varphi(x,t) dx\ge K(4R) J^2(t),\\
\int_{\mathbb R^N}(L\ast |u|^n)|u(x,t)|^s  \varphi(x,t) dx\ge L(4R) I^2(t),
\end{aligned}
\right.
\qquad\mbox{ for all }\;t\geq 0.
\end{equation}
Thus, \eqref{weaksystem}, \eqref{sign_u1_rho}  and \eqref{int_uv} yield
\begin{equation} \label{weaksystem0}
\left\{
\begin{aligned}
K(4R) \int_0^{2R^{\gamma}}J^2(t) \, dt & \le 
\int_{\mathbb R^{N+1}_+} |u| \left(\Big|\frac{\partial^k \varphi}{\partial t^k}\Big|+ |\Delta^m\varphi|\right) dx dt,  \\
L(4R) \int_0^{2R^{\gamma}} I^2(t) \, dt & \le 
\int_{\mathbb R^{N+1}_+} |v| \left( \Big|\frac{\partial^k \varphi}{\partial t^k}\Big| + |\Delta^m\varphi|\right) dx dt,  \\
\end{aligned}
\right.
\end{equation}
for $R>0$ large.
We next employ the estimates \eqref{rho_tt<} and \eqref{Delta_m_rho<} with $\gamma=2m/k$. We find 
$$
\begin{aligned}
 \int_{\mathbb R^{N+1}_+}&   |u| \left(\Big|\frac{\partial^k \varphi}{\partial t^k}\Big|  +|\Delta^m \varphi|\right) dxdt\\
&\le C R^{-2m+\big(N+\frac{2m}{k}\big)\frac{n+s-2}{n+s} }\left[ \biggl(\int_{\text{supp}(\rho_{tt})} |u|^{\frac{n+s}{2}}\varphi \,dxdt\biggr)^{\frac{2}{n+s}} + \biggl(\int_{\text{supp}(\Delta^m\rho)} |u|^\frac{n+s}{2} \varphi \,dxdt\biggr)^{\frac{2}{n+s}} \right].
\end{aligned}
$$
Since supp$(\frac{\partial^k \varphi}{\partial t^k})\subset [R^\gamma, 2R^\gamma]\times \mathbb R^N$ and supp$(\Delta^m \varphi)\subset [0, 2R^\gamma]\times (B_{2R}\setminus B_R)$, applying H\"older's inequality we derive

\begin{equation} \label{weaksystem1}
\begin{aligned}
\int\limits_{\mathbb R^{N+1}_+}   |u| & \left(\Big|\frac{\partial^k \varphi}{\partial t^k}\Big|  +|\Delta^m \varphi|\right) dxdt\le C R^{N-2m\big(1-\frac{1}{k}\big)-\frac{2N+2m/k}{n+s} } \times \\
&\times \left[ \biggl(\int_{R^\gamma}^{2R^\gamma}\Big(\int_{\mathbb R^N} |u|^{\frac{n+s}{2}}\varphi \,dx\Big)^2 dt\biggr)^{ \frac{1}{n+s} } + \biggl(\int_0^{2R^\gamma}\Big(\int_{B_{2R}\setminus B_R}
  |u|^\frac{n+s}{2} \varphi \,dx\Big)^2 dt\biggr)^{ \frac{1}{n+s}} \right].
\end{aligned}
\end{equation}
In particular,
\begin{equation} \label{weaksystem01}
\begin{aligned}
 \int_{\mathbb R^{N+1}_+} &  | u| \left(\Big|\frac{\partial^k \varphi}{\partial t^k}\Big|  +|\Delta^m \varphi|\right) dxdt
&\le C R^{N-2m\big(1-\frac{1}{k}\big)-\frac{2N+2m/k}{n+s} } \Big(\int_0^{2R^\gamma} I^2(t)\, dt\Big)^{\frac{1}{n+s}}.
\end{aligned}
\end{equation}
Similarly
\begin{equation} \label{weaksystem2}
\begin{aligned}
\int\limits_{\mathbb R^{N+1}_+}  |v|  & \left(\Big|\frac{\partial^k \varphi}{\partial t^k}\Big|  +|\Delta^m \varphi|\right) dxdt\le C R^{N-2m\big(1-\frac{1}{k}\big)-\frac{2N+2m/k}{p+q} }  \times \\
&\times  \left[ \biggl(\int_{R^\gamma}^{2R^\gamma}\Big(\int_{\mathbb R^N} |v|^{\frac{p+q}{2}}\varphi \,dx\Big)^2 dt\biggr)^{\frac{1}{p+q} } + \biggl(\int_0^{2R^m}\Big(\int_{B_{2R}\setminus B_R}  |v|^\frac{p+q}{2} \varphi \,dx\Big)^2 dt\biggr)^{ \frac{1}{p+q} } \right],
\end{aligned}
\end{equation}
and in particular, 
\begin{equation} \label{weaksystem02}
\begin{aligned}
\int_{\mathbb R^{N+1}_+} & |v| \left(\Big|\frac{\partial^k \varphi}{\partial t^k}\Big|  +|\Delta^m \varphi|\right) dxdt
&\le C R^{N-2m\big(1-\frac{1}{k}\big)-\frac{2N+2m/k}{p+q} }    \Big(\int_0^{2R^\gamma} J^2(t)\, dt\Big)^{\frac{1}{p+q} }.
\end{aligned}
\end{equation}
From \eqref{weaksystem0}, \eqref{weaksystem01} and \eqref{weaksystem02} we  derive
\begin{equation} \label{weaksystem3}
\left\{
\begin{aligned}
K(4R) \int_0^{2R^\gamma} J^2(t) \, dt & \le C 
R^{N-2m\big(1-\frac{1}{k}\big)-\frac{2N+2m/k}{n+s} }  \Big(\int_0^{2R^\gamma} I^2(t)\, dt\Big)^{ \frac{1}{n+s}},  \\
L(4R) \int_0^{2R^{ \gamma}} I^2(t) \, dt & \le 
C R^{N-2m\big(1-\frac{1}{k}\big)-\frac{2N+2m/k}{p+q} }  \Big(\int_0^{2R^\gamma} J^2(t)\, dt\Big)^{ \frac{1}{p+q} }. \\
\end{aligned}
\right.
\end{equation}
Assume now that \eqref{Ksup1} holds and let $\{R_j\}$ be an increasing sequence of positive real numbers such that $R_j>2R_{j-1}$ for all $j\geq 2$ and 
\begin{equation}\label{eqj1}
K(R_j) L(R_j)^{ \frac{1}{n+s} }   R_{j}^{\frac{2N+2m/k}{(n+s)(p+q)}+\frac{N+2m}{n+s}-N+2m\big(1-\frac{1}{k}\big) }\longrightarrow \ell\in (0, \infty]\quad\mbox{ as }j\to \infty.
\end{equation}
Assume first $\ell=\infty$. Then, using the second estimate of \eqref{weaksystem3} into the first we find
\begin{equation}\label{eqj2}
\Big(\int_0^{2R_j^\gamma} J^2(t) \, dt \Big)^{ 1-\frac{1}{(n+s)(p+q)} } \leq \frac{C}{K(R_j) L(R_j)^{\frac{1}{n+s}}   R_{ j}^{\frac{2N+2m/k}{(n+s)(p+q)}+\frac{N+2m}{n+s}-N+2m\big(1-\frac{1}{k}\big)}}\longrightarrow 0 
\end{equation}
as $j\to \infty$. This implies that $J(t)\equiv 0$ so that $v\equiv 0$. From the second estimate of \eqref{weaksystem3} it follows now that $I(t)\equiv 0$, hence $u\equiv 0$.

Assume now $\ell\in (0, \infty)$. From \eqref{eqj2} it follows that $J^2\in L^1(0, \infty)$ and arguing as in \eqref{eqeq}, this further implies
\begin{equation}\label{eqj3}
\int_{R_j^\gamma}^{2R_j^{\gamma}} \Big(\int_{\mathbb R^N} |v(x,t)|^{\frac{p+q}{2}}\varphi(x,t) dx\Big)^2 dt\to 0\,,\;
\int_{0}^{2R_j^{\gamma}} \Big(\int_{B_{2R_j}\setminus B_{R_j} } |v(x,t)|^{\frac{p+q}{2}}\varphi(x,t) dx\Big)^2 dt\to 0, 
\end{equation}
as $j\to \infty$. 
Using \eqref{eqj3} into \eqref{weaksystem2} (with $R_j$ instead of $R$) it follows that  
$$
\int_{\mathbb R^{N+1}_+} |v| \left( \Big|\frac{\partial^k \varphi}{\partial t^k}\Big|  +|\Delta^m \varphi|\right) dxdt
\le C R^{N-2m\big(1-\frac{1}{k}\big)-\frac{2N+2m/k}{p+q} } o(1)\quad\mbox{ as }j\to \infty.
$$
Using this fact in \eqref{weaksystem0} along with \eqref{weaksystem01} we deduce
\begin{equation} \label{eqj4}
\left\{
\begin{aligned}
K(4R) \int_0^{2R^\gamma} J^2(t) \, dt & \le C 
R^{N-2m\big(1-\frac{1}{k}\big)-\frac{2N+2m/k}{n+s} }  \Big(\int_0^{2R^\gamma} I^2(t)\, dt\Big)^{ \frac{1}{n+s} }  \\
L(4R) \int_0^{2R^\gamma} I^2(t) \, dt & \le 
C R^{N-2m\big(1-\frac{1}{k}\big)-\frac{2N+2m/k}{p+q} }  o(1) \\
\end{aligned}
\right.
\quad \mbox{ as }j\to \infty.
\end{equation}
Using the second estimate of \eqref{eqj4} into the first one obtains
$$
\Big(\int_0^{2R_j^\gamma} J^2(t) \, dt \Big)^{ 1-\frac{1}{(n+s)(p+q)} }\leq \frac{C}{K(R_j) L(R_j)^{\frac{2}{n+s}}  R_{ j}^{ \frac{2N+2m/k}{(n+s)(p+q)}+\frac{N+2m}{n+s}-N+2m\big(1-\frac{1}{k}\big)}}\, o(1) \longrightarrow 0, 
$$
as $j\to \infty$. As before we deduce again $u\equiv v\equiv 0$.

\section*{Acknowledgments}

We thank the Referees for the valuable comments which helped us to prepared this improved version of our results.

\noindent RF is a member of the {\it Gruppo Nazionale per l’Analisi Matematica, la Probabilit\`a e le loro Applicazioni (GNAMPA) of the Istituto Nazionale di Alta Matematica} (INdAM) and she was partly supported by {\it Fondo Ricerca di Base di Ateneo Esercizio 2017–19} of the University of Perugia, Italy, titled {\it Problemi con non linearit\`a dipendenti dal gradiente}.

\end{document}